\newtheorem{theorem}{Theorem}[section]
\newtheorem{lemma}[theorem]{Lemma}
\newtheorem{corollary}[theorem]{Corollary}
\theoremstyle{definition}
\title{Junta threshold for low degree Boolean functions on the slice}
\author{Yuval Filmus}
\begin{document}

\maketitle

\begin{abstract}
We show that a Boolean degree~$d$ function on the slice $\binom{[n]}{k}$ is a junta if $k \geq 2d$, and that this bound is sharp. We prove a similar result for $A$-valued degree~$d$ functions for arbitrary finite $A$, and for functions on an infinite analog of the slice.
\end{abstract}

\section{Introduction} \label{sec:introduction}

A classical result of Nisan and Szegedy~\cite{NisanSzegedy} states that a Boolean degree~$d$ function on the Boolean cube $\{0,1\}^n$ is an $O(d2^d)$-junta.  Let us briefly explain the various terms involved:
\begin{itemize}
\item A function $f$ on the Boolean cube is \emph{Boolean} if $f(x) \in \{0,1\}$ for all $x \in \{0,1\}^n$.
\item A function $f$ on the Boolean cube has \emph{degree (at most) $d$} if there is a polynomial $P$ of degree at most $d$ in $n$ variables such that $f(x_1,\ldots,x_n) = P(x_1,\ldots,x_n)$ for all $x_1,\ldots,x_n \in \{0,1\}$.
\item A function $f$ is an \emph{$m$-junta} if there are $m$ indices $1 \leq i_1, \ldots, i_m \leq n$ and a function $g\colon \{0,1\}^m \to \mathbb{R}$ such that $f(x_1,\ldots,x_n) = g(x_{i_1},\ldots,x_{i_m})$.
\end{itemize}
Chiarelli, Hatami and Saks~\cite{CHS} improved the bound to $O(2^d)$, and the hidden constant was further optimized by Wellens~\cite{Wellens}.

The \emph{slice} $\binom{[n]}{k}$, also known as the \emph{Johnson scheme} $J(n,k)$, consists of all vectors in $\{0,1\}^n$ of Hamming weight $k$. Is it the case that all Boolean degree~$d$ functions on the slice $\binom{[n]}{k}$ are $m(d)$-juntas, for some constant $m(d)$?
Two partial answers to this question appear in~\cite{FI2019a,FI2019b}. First, if $f$ is a Boolean degree~$1$ function on $\binom{[n]}{k}$ and $k,n-k \geq 2$ then $f$ is a $1$-junta~\cite{FI2019a}. Second, there exist constants $C(d) = O(2^d)$ such that if $f$ is a Boolean degree~$d$ function on $\binom{[n]}{k}$ and $k,n-k \geq C(d)$, then $f$ is an $O(2^d)$-junta~\cite{FI2019b}.

The reason that both of these results require both $k$ and $n-k$ to be large is that given a function $f$ on $\binom{[n]}{k}$, we can construct a dual function $\bar{f}$ on $\binom{[n]}{n-k}$ with similar properties by defining $\bar{f}(x_1,\ldots,x_n) = f(1-x_1,\ldots,1-x_n)$. For this reason, when we consider the slice $\binom{[n]}{k}$, we typically assume that $n \geq 2k$.

One of the open questions in~\cite{FI2019b} asks for the minimal $k$ for which every Boolean degree~$d$ function on $\binom{[n]}{k}$ is a junta, whenever $n \geq 2k$.
In this paper, we completely resolve this question.

\begin{theorem} \label{thm:main}
Let $d \geq 1$. There exists a constant $m(d)$ such that the following holds.

If $k \geq 2d$ then for any $n \geq 2k$, every Boolean degree~$d$ function on $\binom{[n]}{k}$ is an $m(d)$-junta.

Conversely, if $1 \leq k < 2d$ then for every $m$ there exist $n \geq 2k$ and a Boolean degree~$d$ function on $\binom{[n]}{k}$ which is not an $m$-junta.
\end{theorem}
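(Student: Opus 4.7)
The theorem has two directions, upper and lower, which I will plan separately.

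For the lower bound, when $1 \le k < 2d$, I will exhibit an explicit family of non-junta Boolean degree-$d$ functions of the form $f(x) = p(|x \cap T|)$, parametrized by an auxiliary set $T \subseteq [n]$ and a univariate polynomial $p$ of degree at most $d$. Since $|x \cap T|$ is linear in the coordinates, $f$ has degree at most $d$ on the slice. The existence step is to verify, for each $1 \le k < 2d$, that there is a nonconstant polynomial $p$ of degree $\le d$ taking values in $\{0,1\}$ on $\{0,1,\ldots,k\}$. Expanding in the Newton basis $\binom{j}{0},\binom{j}{1},\ldots,\binom{j}{d}$ reduces this to a finite-dimensional interpolation problem: the $(d{+}1)$-dimensional polynomial space, restricted to $k+1 \le 2d$ points, contains nonconstant Boolean elements, as can be seen by dimension counting together with explicit examples (for instance, for $k = 2d-1$ one finds ``symmetric'' profiles such as $p(j) = j(k-j)\cdots$ normalized appropriately, and for $k \le d$ every function is achievable). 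Once $p$ is fixed, a coordinate-exchange argument shows that any $m$-junta support for $f$ must contain all of $T$ or all of $[n]\setminus T$ up to $O(1)$ error, so taking $|T|$ and $n-|T|$ large yields an $f$ that is not an $m$-junta.

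For the upper bound ($k \ge 2d$), the plan is to leverage the prior result of~\cite{FI2019b}: there is a constant $C(d)$ such that whenever $\min(k, n-k) \ge C(d)$, every Boolean degree-$d$ function on $\binom{[n]}{k}$ is an $O(2^d)$-junta. Under the hypothesis $n \ge 2k$ we have $n - k \ge k$, so this cited result handles all $k \ge C(d)$ at once. The remaining task is to treat the finite collection of values $k \in \{2d, 2d{+}1, \ldots, C(d){-}1\}$, and it suffices to establish an arity bound $m_k(d)$ for each such $k$; the final junta bound is then $m(d) = \max_k m_k(d)$.

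The main obstacle is precisely this bounded-$k$ sub-case of the upper bound. Here $k$ is small but $n$ can be arbitrary, so the slice is highly asymmetric and the techniques of~\cite{FI2019b}, which require largeness on both sides, do not transfer. My plan for this sub-case is to combine the \emph{rigidity} fact underlying the lower bound (only constant Boolean polynomials of degree $\le d$ exist on $\{0,1,\ldots,k\}$ once $k \ge 2d$) with a restriction argument: by fixing selected coordinates of $f$ to produce auxiliary degree-$d$ functions on smaller slices, and tracking how the Boolean constraint propagates across such restrictions, one should be able to force the essential support of $f$ to be uniformly bounded. This step is combinatorial and finite for each fixed $k$, but care is needed because naive averaging or symmetrization does not preserve the Boolean condition; one must instead work with the Young-type decomposition of the degree-$\le d$ space of slice functions directly, ensuring that the essential coordinates organize themselves into a single junta set of size depending only on $d$.
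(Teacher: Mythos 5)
Your lower-bound construction has a genuine gap. Writing the non-junta example as $f(x)=p(|x\cap T|)$ forces you to produce a nonconstant polynomial $p$ of degree at most $d$ with $p(0),\ldots,p(k)\in\{0,1\}$, i.e.\ it only works when $k<W(\{0,1\},d)$ in the notation of von zur Gathen and Roche, and this quantity can be strictly smaller than $2d$. Already for $d=3$ and $k=4$ no such $p$ exists: the vanishing of the fourth finite difference, $p(0)-4p(1)+6p(2)-4p(3)+p(4)=0$, together with $p(i)\in\{0,1\}$ forces $p$ to be constant (indeed $W(\{0,1\},3)=4$), so your family is empty at $k=4,5$ even though the theorem requires non-junta examples for all $k<6$. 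In particular the claimed ``symmetric profile for $k=2d-1$'' does not exist in general, and dimension counting cannot rescue it, since the obstruction is the Boolean value constraint, not linear algebra. The sharp examples in the range $d<k<2d$ are not symmetric in a single linear form: one takes a sum of products over many \emph{disjoint blocks} of size $\min(d,k)$, such as $\sum_{i}\prod_{j\in B_i}x_j$; this is Boolean on $\binom{[n]}{k}$ because $k<2d$ prevents two blocks from being simultaneously full, has degree $d$, and a coordinate-exchange argument (as in \Cref{lem:junta-criterion}) shows it is not an $m$-junta once there are more than $m$ blocks.

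For the upper bound, your reduction — invoke \Cref{thm:FI2019b} for $k\ge C(d)$ and then treat each $k\in\{2d,\ldots,C(d)-1\}$ separately, taking a maximum of the resulting junta bounds — is exactly the paper's outer structure, but the remaining sub-case is not ``combinatorial and finite'': $n$ is unbounded, so for fixed small $k$ there are infinitely many functions to control, and the whole difficulty is obtaining a junta bound uniform in $n$. Your sketch (rigidity of Boolean univariate polynomials plus a restriction argument organizing the essential coordinates) states the goal rather than proving it. The paper's argument for this core step runs: represent $f$ as a combination of degree-exactly-$d$ monomials (\Cref{lem:degree-d-repr}); show the coefficients are quantized into a finite set depending only on $A,d,k$ (\Cref{lem:quantization}); use a bipartite Ramsey argument together with the rigidity of Boolean-valued low-degree univariate polynomials at $k\ge 2d$ to show that for every $T$ of size $<d$ all but boundedly many coefficients $c(T\cup\{i\})$ coincide (\Cref{lem:bunching}); convert this into a locally sparse representation with lower-degree monomials (\Cref{lem:sparsification}); and finally upgrade local sparsity to a global bound on the number of variables appearing, via a further Ramsey/counting argument that again exploits the univariate rigidity, this time for polynomials evaluated on multiples of a block size (\Cref{lem:junta}). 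Some mechanism of this kind — producing a bound independent of $n$ — is exactly what is missing from your proposal.
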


The second part of the theorem follows from functions of the form
\[
 \sum_{i=1}^\ell \prod_{j=1}^e x_{(e-1)i + j}, \quad e = \min(d,k).
\]
When $n \geq 2\ell e$, these functions are not $\ell e$-juntas.

\paragraph{$A$-valued functions} We prove \Cref{thm:main} in the more general setting of \emph{$A$-valued} functions, for any finite $A$. These are functions $f$ such that $f(x) \in A$ for all $x \in \{0,1\}^n$. When $A = \{0,1,\ldots,a-1\}$ (or more generally, any arithmetic progression of length $a$), the junta threshold is $ad$. The situation gets more interesting when $A$ is not an arithmetic progression. For example, when $A = \{0,1,3\}$, the threshold for $d = 1$ is $k = 2$, and the threshold for $d = 2$ is $k = 6$. The latter threshold is tight due to the following example, which is $A$-valued when $k = 5$:
\[
 3 - 2\sum_{1 \leq i \leq m} x_i + \sum_{1 \leq i < j \leq m} x_i x_j.
\]

When $A$ is not an arithmetic progression, the threshold depends on a parameter first studied, in the special case of $A = \{0,1\}$, by von zur Gathen and Roche~\cite{GathenR1997}. Let $W(A,d)$ be the minimal value $W$ such that every degree~$d$ polynomial $P$ satisfying $P(0),\ldots,P(W) \in A$ is constant.

\begin{theorem} \label{thm:main-A}
Let $A$ be a finite set containing at least two elements, and let $d \geq 1$. 
There exists a constant $m(A,d)$ such that the following holds.
Define
\[
 k(A,d) = d + \max_{1 \leq s \leq d} \left( \left\lfloor \frac{d}{s} \right\rfloor (W(A,s) - s) \right),
\]
which is equal to $|A| d$ if $A$ is an arithmetic progression.

If $k \geq k(A,d)$ then for any $n \geq 2k$, every $A$-valued degree~$d$ function on $\binom{[n]}{k}$ is an $m(A,d)$-junta.

Conversely, if $1 \leq k < k(A,d)$ then for every $m$ there exist $n \geq 2k$ and an $A$-valued degree~$d$ function on $\binom{[n]}{k}$ which is not an $m$-junta.
\end{theorem}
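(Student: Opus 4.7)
\emph{Plan.}

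\textbf{Lower bound.} For each $k < k(A,d)$ I would construct an explicit $A$-valued degree-$d$ non-junta on $\binom{[n]}{k}$. Let $s^{*}\in\{1,\ldots,d\}$ attain the maximum in the definition of $k(A,d)$ and pick a non-constant univariate polynomial $P$ of degree $s^{*}$ with $P(0),\ldots,P(W(A,s^{*})-1)\in A$. Place $r=\lfloor d/s^{*}\rfloor$ disjoint ``active'' blocks $B_{1},\ldots,B_{r}$ inside $[n]$ of arbitrarily large size $m$, together with a fixed background and reservoir of coordinates chosen so that each block count $t_{j}(x)=|x\cap B_{j}|$ ranges only over $\{0,\ldots,W(A,s^{*})-1\}$ as $x$ runs over $\binom{[n]}{k}$. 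Combine the $t_{j}$ into a single $A$-valued function (for instance as a shifted sum $\sum_{j=1}^{r}(P(t_{j})-P(0))+P(0)$, with a slightly more intricate combination required when several blocks can be simultaneously active). The arithmetic of background plus block sizes matches $k(A,d)=d+r(W(A,s^{*})-s^{*})$ exactly, so making $m$ large defeats any fixed $m$-junta bound. This directly generalises the disjoint-monomial example sketched after \Cref{thm:main}.

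\textbf{Upper bound.} For the upper bound I would reduce to the junta theorem of~\cite{FI2019b}, which already gives an $O_{d}(1)$-junta bound once $k$ and $n-k$ both exceed some constant $C(A,d)$ (the Boolean version; I also expect this to carry over to finite $A$ with essentially the same proof). Let $f:\binom{[n]}{k}\to A$ of degree $d$ satisfy $k\geq k(A,d)$, and let $\tilde P$ be its unique multilinear polynomial representative of degree $\leq d$ on $\mathbb{R}^{n}$. The core step is a \emph{rigidity} claim: there is a set $J\subseteq[n]$ of size $m(A,d)$ outside which $f$ is invariant under coordinate swaps. To prove rigidity, for each coordinate $i\notin J$ I would find a ``witness'' inside the slice consisting of, for each $s\leq d$, $\lfloor d/s\rfloor$ disjoint blocks of size $W(A,s)-s$ plus a constant number of auxiliary coordinates, through which a one-parameter family of $s$-dimensional swap operations passes. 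Along each such family the restriction of $\tilde P$ is a univariate polynomial of degree $\leq s$ taking $A$-values at $W(A,s)$ consecutive integers, hence constant by the definition of $W(A,s)$. That all these witnesses simultaneously fit inside a slice of size $k$ is precisely what the hypothesis $k\geq d+\max_{s}\lfloor d/s\rfloor(W(A,s)-s)$ buys us, and it is where both sides of \Cref{thm:main-A} meet.

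\textbf{Main obstacle.} The central obstacle is the rigidity step with the sharp constant: a naive version using only $s=1$ swaps gives only $k\geq d+d(W(A,1)-1)$, which matches $k(A,d)$ exactly when the $s=1$ term is the maximiser, and to reach the full $\max_{s}\lfloor d/s\rfloor(W(A,s)-s)$ one must simultaneously juggle $s$-dimensional swap paths for every $s\leq d$, keeping track of the $\lfloor d/s\rfloor$ disjoint copies each requires. A secondary, more routine obstacle is to confirm that the Boolean-valued result of~\cite{FI2019b} extends to $A$-valued targets with the same shape of junta bound; I expect this to follow from re-running the original argument on each indicator $\mathbf{1}[f=a]$ and taking the union of the resulting junta sets.
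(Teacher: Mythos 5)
Your converse construction has a genuine gap. On a slice the only constraint on an input is its total weight, so you cannot choose a ``background and reservoir'' that forces each block count $t_j$ to range only over $\{0,\ldots,W(A,s^{*})-1\}$: an input may place all of its ones (or all that remain after satisfying any gate) inside a single block, and the function must still be $A$-valued there, which already breaks the shifted sum $\sum_{j}(P(t_j)-P(0))+P(0)$; moreover, no ``more intricate combination'' of a bounded number of large blocks can repair this. Concretely, take $A=\{0,1,2\}$ and $d=2$, so $W(A,1)=3$, $W(A,2)=4$, and $k(A,2)=6$ with unique maximizer $s^{*}=1$, $\lfloor d/s^{*}\rfloor=2$; consider $k=5$. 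With the large blocks and reservoir you posit, any degree-$2$ function $Q(t_1,t_2)$ of the two block counts that is $A$-valued on the slice is constant: its restriction to $t_2=0$ (resp.\ $t_2=1$) takes values in $A$ at $6$ (resp.\ $5$) consecutive integers, hence is identically constant because $W(A,2)=4$, which kills every monomial containing $t_1$; the remaining univariate polynomial in $t_2$ is then constant for the same reason, and inserting a gate only lowers the degree available for the counts, so the same argument applies. Yet a non-junta for $k=5$ must exist, and it has the opposite shape, namely $x_1x_2+x_3x_4+\cdots$: \emph{many} blocks of small size $r$, a degree-$s$ polynomial applied to the number of \emph{completely filled} blocks, possibly times a degree-$t$ gate monomial, with $t+rs\le d$, as in the paper's proof of the converse of \Cref{thm:main-body}. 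The factor $\lfloor d/s\rfloor$ in $k(A,d)$ comes from the block size $r$ (each filled block costs $r$ ones but advances the argument of $P$ by only one), not from running $\lfloor d/s\rfloor$ parallel copies of a degree-$s^{*}$ gadget; your template reaches only values of $k$ well below the threshold whenever $\lfloor d/s^{*}\rfloor\ge 2$, and one also needs the separate families for $k\le d$ and for an $e$-element gate with a degree-$(d-e)$ polynomial of a plain count to cover every $k<k(A,d)$.

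For the forward direction the proposal is a plan that assumes what has to be proved. The assertion that along your swap families the restriction of $\tilde P$ is a univariate polynomial of degree at most $s$ taking $A$-values at $W(A,s)$ consecutive integers presupposes that the coefficients of $\tilde P$ are already organized compatibly with the witness blocks (equal along them, vanishing on irrelevant sets); establishing exactly this structure is the entire content of the sharp upper bound. (Also, the multilinear degree-$\le d$ representative on a slice is not unique; what is unique, for $n-k\ge d$, is the expansion in monomials of degree exactly $d$.) The paper gets there through a chain you would need to reproduce or replace: quantization of the degree-$d$ expansion (the coefficients lie in a finite set depending on $A,d,k$), a bipartite-Ramsey ``bunching'' step showing that $c(T\cup\{i\})$ is independent of $i$ up to boundedly many exceptions, a sparsification rewriting, and an inductive global-sparsity lemma combining Ramsey's theorem with a probabilistic deletion argument; only after all of that do univariate restrictions of the kind you describe appear, and they yield precisely the conditions defining $\kappa(A,d)$, after which one still verifies $\kappa(A,d)=k(A,d)$. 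Your reduction of the large-$k$ regime to \Cref{thm:FI2019b} (and its $A$-valued version \Cref{cor:FI2019b}, via indicators or interpolation) is fine, and a uniform $m(A,d)$ then follows by maximizing over the finitely many $k$ between $k(A,d)$ and $C_A^{d}$; but in the critical range your sketch names the ``central obstacle'' without overcoming it, so the main direction remains unproved.
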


When $A$ is an arithmetic progression, the maximum in the definition of $k(A,d)$ is obtained (not necessarily uniquely) at $s = 1$. When $A = \{0,1,3\}$ and $d = 2$, the maximum is obtained uniquely at $s = 2$.

\paragraph{The infinite slice} When $1 \leq k < 2d$, the non-junta example in the Boolean case extends to \emph{infinitely} many variables:
\[
 \sum_{i=1}^\infty \prod_{j=1}^e x_{(e-1)i + j}, \quad e = \min(d,k).
\]
The same holds for the non-junta example we gave for $A = \{0,1,3\}$ and $d = 2$. This is a general feature of our non-junta examples. We can think of such expressions as function on the \emph{infinite slice} $\binom{[\infty]}{k}$, which consists of all vectors in $\{0,1\}^{\mathbb{N}}$ of Hamming weight $k$. Conversely, when $k \geq k(A,d)$, every $A$-valued degree~$d$ function on $\binom{[\infty]}{k}$ is a junta.

\begin{theorem} \label{thm:main-A-infinite}
Let $A$ be a finite set containing at least two elements, and let $d \ge 1$. The following holds for the parameters $m(A,d),k(A,d)$ defined in \Cref{thm:main-A}.

If $k \geq k(A,d)$ then every $A$-valued degree~$d$ function on $\binom{[\infty]}{k}$ is an $m(A,d)$-junta.

Conversely, if $1 \leq k < k(A,d)$ then there exists an $A$-valued degree~$d$ function on $\binom{[\infty]}{k}$ which is not an $m$-junta for \emph{any} finite $m$.
\end{theorem}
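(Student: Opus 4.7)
The plan is to handle the positive direction by restricting to finite slices and showing that the essential junta sets stabilize, and to obtain the converse by extending the finite non-junta constructions to infinite sums.

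For the positive direction, assume $k \geq k(A,d)$, let $f \colon \binom{[\infty]}{k} \to A$ have degree~$d$, and set $m = m(A,d)$. For each sufficiently large $n$ (specifically, $n \geq \max(2k, 2m+1)$), let $f_n$ denote the restriction of $f$ to $\binom{[n]}{k}$, viewed inside $\binom{[\infty]}{k}$ by padding with zeros. Each $f_n$ is an $A$-valued degree-$d$ function on $\binom{[n]}{k}$, hence an $m$-junta by \Cref{thm:main-A}. The bound $n \geq 2m+1$ guarantees that for any two junta sets $J_1, J_2$ of size at most $m$ their complements in $[n]$ overlap, so $\mathrm{Sym}([n]\setminus J_1)$ and $\mathrm{Sym}([n]\setminus J_2)$ generate $\mathrm{Sym}([n]\setminus(J_1\cap J_2))$; thus $f_n$ is a $(J_1\cap J_2)$-junta, and the minimal junta set $E_n \subseteq [n]$ is well-defined with $|E_n|\leq m$. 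The key observation is that these essential sets are nested: if $n \leq n'$ then $f_n$ is obtained from $f_{n'}$ by fixing the last $n'-n$ coordinates to zero, so $f_n$ is an $(E_{n'}\cap[n])$-junta and hence $E_n \subseteq E_{n'}\cap[n] \subseteq E_{n'}$. The chain $\{E_n\}$ has sizes bounded by $m$, so it stabilizes at some finite set $E^\infty$ of size at most $m$. Any $x \in \binom{[\infty]}{k}$ has finite support, so lies in some $\binom{[n]}{k}$ with $E_n = E^\infty$, and $f(x) = f_n(x)$ depends only on $x|_{E^\infty}$; therefore $f$ is an $E^\infty$-junta.

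For the converse, when $1 \leq k < k(A,d)$, I would inspect the proof of \Cref{thm:main-A} to confirm that its non-junta witnesses take the block-translate form $F_\ell = \sum_{i=1}^\ell g(x|_{B_i})$, with $g$ a fixed $A$-valued degree-$d$ polynomial on $b$ variables and $B_1,\ldots,B_\ell$ pairwise disjoint $b$-blocks; the definition of $W(A,s)$ is precisely tailored to make $F_\ell$ take values in $A$ on $\binom{[n]}{k}$ uniformly in $\ell$. Extending to $\ell = \infty$ with pairwise disjoint $B_i \subset \mathbb{N}$, the sum $F_\infty$ is well-defined on $\binom{[\infty]}{k}$ because every $x$ has finite support and meets only finitely many blocks; $F_\infty$ has degree~$d$, is $A$-valued by the same combinatorial verification as in the finite case, and depends essentially on infinitely many coordinates. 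The latter can be seen by contradiction: if $F_\infty$ were an $M$-junta on some set $J$, restricting to $\binom{[n]}{k}$ for $n$ large enough to contain $J$ and many blocks outside $J$ would exhibit a finite $F_\ell$ that is a $J$-junta, contradicting the finite non-junta statement for $\ell$ large.

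I expect the main obstacle will be in confirming the block-translate form: one must extract from the proof of \Cref{thm:main-A} that all non-junta constructions, particularly for the extremal value of $s$ achieving the maximum in the definition of $k(A,d)$, decompose into disjoint translates of a bounded gadget. The examples displayed in the introduction — the Boolean $\sum_{i} \prod_{j} x_{(e-1)i + j}$ and the analogous $\{0,1,3\}$ construction — strongly suggest this is the uniform structure, in which case the extension to $\ell = \infty$ is immediate.
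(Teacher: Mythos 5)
Your positive direction is correct and takes a genuinely different route from the paper. The paper argues by contradiction: assuming $f$ is not a $2m(A,d)$-junta, it builds $m(A,d)+1$ disjoint sensitive pairs $\{i_t,j_t\}$ together with witnessing inputs, restricts to a finite slice containing all of them, and contradicts the finite junta bound of \Cref{thm:main-A}; it then reapplies \Cref{thm:main-A} to the resulting finitely many relevant coordinates to improve $2m(A,d)$ to $m(A,d)$. You instead exploit that, on a slice with $n\geq 2m+1$, being a $J$-junta is equivalent to invariance under $\mathrm{Sym}([n]\setminus J)$, so junta sets of size at most $m$ are closed under intersection and each restriction $f_n$ has a canonical minimal junta set $E_n$; monotonicity $E_n\subseteq E_{n'}$ plus the size bound forces stabilization, and the stable set is a junta set for $f$. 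This is a clean alternative and it yields the $m(A,d)$ bound in one pass rather than via the intermediate $2m(A,d)$ bound. (The only points to make explicit are the two facts you use implicitly: invariance under $\mathrm{Sym}([n]\setminus J)$ implies $J$-junta on a slice because the number of ones outside $J$ is determined by $x|_J$, and $\mathrm{Sym}([n]\setminus J_1)$, $\mathrm{Sym}([n]\setminus J_2)$ generate $\mathrm{Sym}([n]\setminus(J_1\cap J_2))$ once their domains intersect.)

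For the converse your plan coincides with the paper's, which constructs the infinite examples directly inside the proof of \Cref{thm:main-body}, but your structural guess is slightly off and your argument is left conditional on it. The general witnesses are not sums of disjoint gadget translates $\sum_i g(x|_{B_i})$: for $k>d$ they have the form $a(1-x_T)+x_T\,P\bigl(\sum_i x_{B_i}\bigr)$ with $P$ of degree $s$, and when $s\geq 2$ the expansion contains monomials meeting several blocks, so the function does not decompose blockwise. This does not hurt the extension: with infinitely many disjoint blocks the expression is still a legitimate infinite sum of monomials of degree at most $t+rs\leq d$, is $A$-valued because at most $\lfloor (k-t)/r\rfloor$ blocks can be fully on, and is not an $m$-junta for any finite $m$ directly by \Cref{lem:junta-criterion} applied with arbitrarily large disjoint sets $I,J$ — no restriction-to-finite-slices compactness step is needed, and indeed your restriction argument requires care to leave infinitely many coordinates outside the blocks (as the paper does by using only every other block) so that the restricted function really is one of the finite witnesses. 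So the converse is right in spirit, but as written it defers exactly the verification that constitutes the paper's converse proof.
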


\paragraph{Structure of the paper} After a few preliminaries in \Cref{sec:prel}, we prove our main theorems in \Cref{sec:main}. We conclude the paper with a few remarks in \Cref{sec:remarks}.

\paragraph{Acknowledgements} This project has received funding from the European Union's Horizon 2020 research and innovation programme under grant agreement No~802020-ERC-HARMONIC.

\section{Preliminaries} \label{sec:prel}

\paragraph{Slice} For integers $0 \leq k \leq n$, we define the \emph{slice} $\binom{[n]}{k}$ as
\[
 \binom{[n]}{k} = \left\{ x \in \{0,1\}^n : \sum_{i=1}^n x_i = k \right\}.
\]
We think of functions on the slice as accepting as input $n$ bits $x_1,\ldots,x_n \in \{0,1\}$, with the promise that exactly $k$ of them are equal to~$1$.

A function $f$ on the slice $\binom{[n]}{k}$ is \emph{$A$-valued}, for some $A \subseteq \mathbb{R}$, if $f(x) \in A$ for all $x \in \binom{[n]}{k}$. A \emph{Boolean} function is an $\{0,1\}$-valued function.

\paragraph{Degree}
For $S \subseteq [n] = \{1,\ldots,n\}$, we define
\[
 x_S = \prod_{i \in S} x_i,
\]
with $x_\emptyset = 1$. We call $x_S$ a \emph{degree $|S|$ monomial}.

A function on the slice $\binom{[n]}{k}$ has \emph{degree (at most) $d$} if it can be expressed as a polynomial of degree at most $d$ over the variables $x_1,\ldots,x_n$. We will usually omit the words ``at most''.

\begin{lemma} \label{lem:degree-d-repr}
If $k \geq d$, then every degree~$d$ function on $\binom{[n]}{k}$ can be expressed as a linear combination of degree~$d$ monomials.
\end{lemma}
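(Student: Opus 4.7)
The plan is to start with an arbitrary polynomial representation $P$ of $f$ of degree at most $d$, and then show that on the slice every lower-degree monomial appearing in $P$ can be rewritten as a linear combination of degree-$d$ monomials. Iterating this replacement gives the desired representation. The hypothesis $k \ge d$ will enter precisely to guarantee that the constants we divide by never vanish.

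The key identity uses the two defining features of the slice: the weight constraint $\sum_{i=1}^n x_i = k$ and the Boolean relation $x_i^2 = x_i$. For any $S \subseteq [n]$ with $|S| = s$, multiplying $x_S$ by $x_i$ for $i \in S$ leaves it unchanged, while for $i \notin S$ it produces the degree-$(s+1)$ monomial $x_{S \cup \{i\}}$. Therefore, on the slice,
\[
 (k-s)\, x_S \;=\; x_S \sum_{i \notin S} x_i \;=\; \sum_{i \notin S} x_{S \cup \{i\}}.
\]
Whenever $s < k$, this lets us solve for $x_S$ and write it as $\frac{1}{k-s} \sum_{i \notin S} x_{S \cup \{i\}}$, a linear combination of degree-$(s+1)$ monomials.

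I would then proceed by downward induction on the minimum degree of the representation. Starting from a representation in which the lowest degree monomials have degree $s < d$, I apply the identity above to each such monomial to obtain a representation whose lowest-degree monomials have degree at least $s+1$. Since the identity requires only $s < k$, and we use it for $s = 0, 1, \dots, d-1$, the hypothesis $k \ge d$ (equivalently $k - (d-1) \ge 1$) is exactly what is needed to ensure every denominator $k-s$ is nonzero. After at most $d$ applications, only degree-$d$ monomials remain.

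There is no real obstacle here beyond noticing the identity; the main thing to check is the bookkeeping, namely that the substitutions are well-defined and terminate, which is immediate from the observation that each substitution strictly increases the minimum degree appearing in the representation. It is also worth noting, though not strictly necessary for the lemma, that the representation obtained this way is not unique: on the slice the degree-$d$ monomials are linearly dependent for $n$ large, since applying the same identity to a degree-$d$ monomial yields a nontrivial linear relation among degree-$(d+1)$ monomials which can then be pulled back.
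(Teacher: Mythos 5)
Your main argument is correct and is essentially the paper's proof: the paper applies the closed-form slice identity $x_S = \binom{k-|S|}{d-|S|}^{-1}\sum_{S \subseteq T,\, |T|=d} x_T$ to raise each monomial to degree $d$ in one shot, while you iterate the one-step identity $(k-|S|)\,x_S = \sum_{i \notin S} x_{S \cup \{i\}}$ and induct on the minimum degree; iterating your identity recovers the paper's, and the hypothesis $k \geq d$ enters identically in both (as in the paper, one should first multilinearize $P$ via $x_i^2 = x_i$, which you invoke only implicitly, but that is a trivial step).

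One correction to your closing aside: the claim of non-uniqueness for large $n$ is false. As the paper remarks immediately after the lemma (and later deduces from its quantization lemma), the representation is unique whenever $n - k \geq d$, i.e.\ the degree-$d$ monomials are linearly independent as functions on $\binom{[n]}{k}$ in that range. Your relation $(k-d)\,x_S = \sum_{i \notin S} x_{S \cup \{i\}}$ relates a degree-$d$ monomial to degree-$(d+1)$ monomials and cannot be ``pulled back'' to a dependence among degree-$d$ monomials; such dependences occur only when $n - k < d$ (for instance $\prod_{i=1}^{d}(1-x_i)$ vanishes on the slice in that case). Since you flag the aside as unnecessary, the proof of the lemma itself stands.
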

\begin{proof}
Let $f$ be a degree~$d$ function on $\binom{[n]}{k}$. By definition, it can be expressed as a polynomial $P$ of degree at most~$d$. Since $x_i^2 = x_i$, we can replace each monomial of $P$ by its multilinearization, obtained by replacing higher powers of each $x_i$ by $x_i$, obtaining a multilinear polynomial $Q$ of degree at most~$d$ expressing $f$. Using the identity
\[
 x_S = \frac{1}{\binom{k-|S|}{d-|S|}} \sum_{\substack{S \subseteq T \subseteq [n] \\ |T| = d}} x_T,
\]
which is valid over $\binom{[n]}{k}$,
we can convert $Q$ into an equivalent polynomial in which all monomials have degree exactly~$d$.
\end{proof}

It turns out that if $n-k \geq d$ then the representation given by the lemma is unique. For this and more on the spectral perspective on functions on the slice, consult~\cite{Filmus2016a,FM2019}.

\paragraph{Junta} A function $f$ on the slice $\binom{[n]}{k}$ is a \emph{$J$-junta}, where $J \subseteq [n]$, if there is a function $g\colon \{0,1\}^J \to \mathbb{R}$ such that $f(x) = g(x|_J)$ for all $x \in \binom{[n]}{k}$; here $x|_J$ is the restriction of $x$ to the coordinates in $J$.

A function is an \emph{$m$-junta} if it is a \emph{$J$-junta} for some set $J$ of size at most $m$.

Given $x \in \binom{[n]}{k}$ and $i,j \in [n]$, we define $x^{(i\;j)}$ to be the vector obtained by switching coordinates $i$ and $j$.

\begin{lemma} \label{lem:junta-criterion}
Let $f$ be a function on the slice $\binom{[n]}{k}$. Suppose that $I,J$ are disjoint subsets of $[n]$ such that for every $i \in I$ and $j \in J$ there exists $x \in \binom{[n]}{k}$ such that $f(x) \neq f(x^{(i\;j)})$.

If $f$ is an $m$-junta then $m \geq \min(|I|,|J|)$.
\end{lemma}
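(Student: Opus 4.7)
The plan is to observe that this is really a statement about vertex covers of the complete bipartite graph $I \times J$. Suppose $f$ is a $J_0$-junta for some $J_0 \subseteq [n]$ with $|J_0| = m$; the goal is to show $|J_0| \ge \min(|I|,|J|)$.

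First I would unpack the hypothesis for a fixed pair $(i,j) \in I \times J$. The vectors $x$ and $x^{(i\;j)}$ agree on every coordinate outside $\{i,j\}$, so they certainly agree on every coordinate of $J_0 \setminus \{i,j\}$. Since $f$ depends only on the coordinates in $J_0$, the assumption $f(x) \ne f(x^{(i\;j)})$ forces $\{i,j\} \cap J_0 \ne \emptyset$. In other words, $J_0$ must contain either $i$ or $j$ for every such pair.

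Next, I would interpret this as saying that $J_0$ is a vertex cover of the complete bipartite graph with sides $I$ and $J$. The minimum vertex cover of $K_{|I|,|J|}$ has size $\min(|I|,|J|)$, so $|J_0| \ge \min(|I|,|J|)$. Concretely, if $|J_0| < \min(|I|,|J|)$ then $|J_0 \cap I| \le |J_0| < |I|$ and $|J_0 \cap J| \le |J_0| < |J|$, so we may pick $i \in I \setminus J_0$ and $j \in J \setminus J_0$, contradicting the covering property established above.

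There is no real obstacle here: the whole argument is a one-line observation that swapping two coordinates outside $J_0$ cannot change the value of a $J_0$-junta, combined with the trivial bipartite vertex-cover bound. The only thing to be careful about is to phrase things in terms of an arbitrary junta set $J_0$ rather than conflating it with the set $J$ appearing in the statement.
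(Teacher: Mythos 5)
Your proof is correct and is essentially the paper's argument: both hinge on the observation that swapping two coordinates outside the junta set cannot change the value, so the junta set must meet every pair $(i,j) \in I \times J$, which forces it to contain all of $I$ or all of $J$ (equivalently, your bipartite vertex-cover bound). The vertex-cover phrasing is just a restatement of the same one-line argument.
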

\begin{proof}
Suppose that $f$ is an $m$-junta. Then there is a set $K \subseteq [n]$ of size at most $m$ and a function $g\colon \{0,1\}^K \to \mathbb{R}$ such that $f(x) = g(x|_K)$ for all $x \in \binom{[n]}{k}$. In particular, if $i,j \notin K$ then $f(x) = f(x^{(i\;j)})$ for all $x \in \binom{[n]}{k}$. This shows that either $K \supseteq I$ or $K \supseteq J$, and so $m \geq |K| \geq \min(|I|,|J|)$.
\end{proof}

The main result of~\cite{FI2019b} states that Boolean degree~$d$ functions on $\binom{[n]}{k}$ are juntas for large $k$.

\begin{theorem}[{\cite{FI2019b}}] \label{thm:FI2019b}
There exist constants $C,K>0$ such that the following holds. If $C^d \leq k \leq n - C^d$ and $f$ is a Boolean degree~$d$ function on $\binom{[n]}{k}$, then $f$ is a $KC^d$-junta.
\end{theorem}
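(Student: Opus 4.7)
The plan is to reduce to the Boolean-cube junta theorem of Chiarelli--Hatami--Saks via a restriction argument. Given a Boolean degree~$d$ function $f$ on $\binom{[n]}{k}$ with $k, n-k \ge C^d$, \Cref{lem:degree-d-repr} lets us write $f$ as a linear combination of degree~$d$ monomials. Call a coordinate $i$ \emph{influential} if there exist $x \in \binom{[n]}{k}$ and $j \in [n]$ with $f(x) \neq f(x^{(i\;j)})$; by \Cref{lem:junta-criterion}, bounding the number of influential coordinates by $KC^d$ would yield the conclusion.

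To bound the influential coordinates, I would use restrictions of $f$. Fixing $n-2m$ coordinates to specific values with $m \ge C^d$ produces a Boolean degree~$d$ function on $\binom{[2m]}{m}$; a further substitution pairing each coordinate with a ``complementary'' one converts this into a Boolean degree~$d$ function on $\{0,1\}^m$, to which Chiarelli--Hatami--Saks applies and yields an $O(2^d)$-junta structure. The influential coordinates of $f$ are then contained in the union of the junta sets obtained from all such restrictions.

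The main obstacle is to show that this union has size $O(C^d)$, rather than growing with $n$ or with the number of restrictions. The natural strategy is an agreement / stability argument: different restrictions must produce largely the same junta set, because otherwise $f$ would need too many independent Fourier components to be both Boolean and of degree~$d$. The Boolean constraint together with $k \ge C^d$ should provide the rigidity needed to upgrade an ``approximate junta'' conclusion from the cube theorem into an exact junta bound on the slice. This is the heart of the proof and the step forcing the exponential dependence $C^d$; I expect the argument to invoke a Kindler--Safra-type stability theorem for low-degree Boolean functions on the slice, or a direct sensitivity-analysis analogue of it.

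Finally, once the set $J$ of influential coordinates is known to have size at most $KC^d$, the fact that $f$ is a $J$-junta follows because any two elements of $\binom{[n]}{k}$ agreeing on $J$ can be connected by a sequence of transpositions $(i\;j)$ with $i,j \notin J$, along which $f$ is constant by the non-influence of $i$ and $j$. The routine bookkeeping then propagates the constants $C,K$ through the restriction step and the stability step.
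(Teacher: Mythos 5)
The statement you set out to prove is not actually proved in this paper: it is quoted from \cite{FI2019b}, and the final remarks of the paper indicate that the argument there runs in the opposite direction from yours --- one shows that for $C^d \leq k \leq n - C^d$ the function $f$ coincides with the restriction of a Boolean degree~$d$ function on the whole cube $\{0,1\}^n$, and only then invokes the cube junta theorem of \cite{NisanSzegedy,CHS}. Your restriction-based plan is therefore a genuinely different route, but it has a gap exactly at the step you yourself call the heart of the proof. Bounding the union of the junta sets produced by the various restrictions is the entire difficulty, and you give no argument for it: you say an agreement/stability argument ``should'' provide the needed rigidity and that you ``expect'' a Kindler--Safra-type theorem for the slice to be invoked. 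No such theorem is identified, its hypotheses are not checked against the parameters you need (exact degree~$d$, $k \geq C^d$, an exact rather than approximate junta conclusion), and the upgrade from an approximate-junta statement to the exact statement of the theorem is itself a nontrivial deduction that you do not sketch. As written, this is a plan for a proof rather than a proof.

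There is also a concrete problem one step earlier. After fixing coordinates so as to land in $\binom{[2m]}{m}$, your pairing substitution $x_{2i-1} = y_i$, $x_{2i} = 1 - y_i$ does produce a Boolean degree~$d$ function $g$ on $\{0,1\}^m$, and \cite{CHS} makes $g$ an $O(2^d)$-junta; but the image of this substitution consists of only $2^m$ of the $\binom{2m}{m}$ points of the sub-slice, so irrelevance of a coordinate for $g$ does not imply non-influence of the corresponding coordinates for the restriction of $f$ to the sub-slice. Hence even the local junta information whose union you propose to control is not established by the reduction as described. Your final step is fine --- if no coordinate outside $J$ is influential, then any two points of $\binom{[n]}{k}$ agreeing on $J$ are connected by transpositions avoiding $J$, so $f$ is a $J$-junta --- but everything feeding into it is missing.
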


A similar result holds for $A$-valued functions.

\begin{corollary} \label{cor:FI2019b}
For every finite set $A$ containing at least two elements there exist constants $C_A,K_A>0$ such that the following holds. If $C_A^d \leq k \leq n - C_A^d$ and $f$ is an $A$-valued degree~$d$ function on $\binom{[n]}{k}$, then $f$ is a $K_AC_A^d$-junta.
\end{corollary}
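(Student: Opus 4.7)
The plan is to reduce the $A$-valued case to the Boolean case handled by \Cref{thm:FI2019b}, by expressing each level set of $f$ as a Boolean function of bounded degree and then taking the union of the resulting junta sets.

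Concretely, for each $a \in A$ define the indicator $g_a(x) = \mathbf{1}[f(x) = a]$. Since $f$ only takes values in $A$, Lagrange interpolation gives
\[
 g_a(x) = \prod_{b \in A \setminus \{a\}} \frac{f(x) - b}{a - b},
\]
which is a polynomial in $f$ of degree $|A|-1$. Because $f$ has degree $d$ on the slice, $g_a$ is a Boolean function of degree at most $d(|A|-1)$ on the slice; this uses $x_i^2 = x_i$ to control the degree after multilinearization, as in \Cref{lem:degree-d-repr}.

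Now apply \Cref{thm:FI2019b} to each $g_a$. Provided $k, n-k \geq C^{d(|A|-1)}$, each $g_a$ is a $K C^{d(|A|-1)}$-junta on some set $J_a \subseteq [n]$. Since $f = \sum_{a \in A} a \cdot g_a$, the function $f$ depends only on the coordinates in $J := \bigcup_{a \in A} J_a$, and $|J| \leq |A| \cdot K C^{d(|A|-1)}$. Setting $C_A := C^{|A|-1}$ and $K_A := |A| K$ converts the hypothesis $k, n-k \geq C^{d(|A|-1)}$ into $k, n-k \geq C_A^d$, and bounds the junta size by $K_A C_A^d$, as required.

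There is no real obstacle: every step is essentially bookkeeping once the right interpolation identity is written down. The only point that needs a moment's care is that the ``degree on the slice'' of $g_a$ really is bounded by $d(|A|-1)$, which follows because any multilinear polynomial representation of $f$ of degree $d$ gives, upon multiplying $|A|-1$ copies of $(f-b)$ and multilinearizing, a representation of $g_a$ of degree at most $d(|A|-1)$.
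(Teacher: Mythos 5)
Your proposal is correct and matches the paper's own proof essentially verbatim: both use the Lagrange interpolation indicators $g_a$ (called $f_a$ in the paper) of degree at most $(|A|-1)d$, apply \Cref{thm:FI2019b} to each, and take $C_A = C^{|A|-1}$, $K_A = |A|K$, with the junta set for $f$ being the union of the junta sets of the $g_a$.
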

\begin{proof}
For each $a \in A$, define
\[
 f_a(x) = \prod_{\substack{b \in A \\ b \neq a}} \frac{f(x) - b}{a - b}.
\]
The function $f_a$ is a Boolean degree~$(|A|-1)d$ function, and
\[
 f(x) = \sum_{a \in A} a f_a(x).
\]
Let $C_A = C^{|A|-1}$ and $K_A = |A| K$. If $C_A^d \leq k \leq n - C_A^d$ then the theorem shows that each $f_a$ is a $KC_A^d$-junta, hence $f$ is a $K_AC_A^d$-junta.
\end{proof}

\paragraph{Infinite slice} For an integer $k \geq 0$, we define the \emph{infinite slice} $\binom{[\infty]}{k}$ as
\[
 \binom{[\infty]}{k} = \left\{ x \in \{0,1\}^{\mathbb{N}} : \sum_{i=1}^\infty x_i = k \right\}.
\]

A function $f$ on the infinite slice $\binom{[\infty]}{k}$ has \emph{degree $d$} if it can be expressed as an infinite sum of monomials of degree at most~$d$:
\[
 f(x) = \sum_{\substack{S \subseteq \mathbb{N} \\ |S| \leq d}} c(S) x_S.
\]
While the sum is infinite, all but $2^k$ of the monomials are non-zero on any given input, and therefore the sum on the right defines a real-valued function. \Cref{lem:degree-d-repr} extends to this setting.

The definition of junta and \Cref{lem:junta-criterion} extend to this setting as well.

\paragraph{Bipartite Ramsey theorem}

We assume familiarity with the classical Ramsey theorem. Our proof will also make use of a bipartite Ramsey theorem, whose simple proof we include for completeness.

\begin{theorem} \label{thm:bipartite-ramsey}
Let $c,d \in \mathbb{N}$ be parameters. For every $k \geq 1$ there exists $n \geq 1$ such that the following holds.

Suppose that $A,B$ are two disjoint sets of size $n$. Suppose furthermore that all subsets of $A \cup B$ of size $d$ are colored using one of $c$ colors. Then there exist subsets $A' \subseteq A$ and $B' \subseteq B$ of size $k$ and colors $c_0,\ldots,c_d$ such that every $T \subseteq A' \cup B'$ of size $d$ has color $c_{|T \cap A|}$.
\end{theorem}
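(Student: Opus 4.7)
My plan is to iterate the classical Ramsey theorem, processing one value of $i \in \{0, 1, \ldots, d\}$ at a time. Given the coloring $\chi$ of $d$-subsets, I would shrink $A$ and $B$ in turn so that every $d$-subset with exactly $i$ elements from (current) $A$ and $d-i$ elements from (current) $B$ receives a common color $c_i$. Passing to further subsets of $A, B$ preserves this property for the values of $i$ already handled, so after $d+1$ rounds I arrive at the desired $A', B'$ and colors $c_0, \ldots, c_d$.

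The key observation is that the processing step at index $i$ reduces to two applications of the classical Ramsey theorem. First, I assign to each $i$-subset $S \in \binom{A}{i}$ the function $f_S \colon \binom{B}{d-i} \to [c]$ defined by $f_S(T) = \chi(S \cup T)$. This amounts to coloring the $i$-subsets of $A$ with $c^{\binom{|B|}{d-i}}$ possible colors. If $|A|$ is sufficiently large, classical Ramsey produces a subset $A' \subseteq A$ of any desired size on which $f_S \equiv f_0$ for some fixed $f_0$; equivalently, for $S \in \binom{A'}{i}$ and $T \in \binom{B}{d-i}$ the color $\chi(S \cup T)$ depends only on $T$. Then I apply classical Ramsey again, now to $B$ with the $c$-coloring $f_0$ of $(d-i)$-subsets, obtaining $B' \subseteq B$ of the required size on which $f_0$ is constantly equal to some $c_i$.

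Iterating over $i = 0, 1, \ldots, d$ and composing the shrinkages yields the theorem, provided the initial size $n$ is taken to be a suitable tower of iterated Ramsey numbers. The main issue is not conceptual but logistical: one must work out the sizes backwards, starting from the required final size $k$, so that each invocation of the classical Ramsey theorem still has enough elements to operate on after the preceding rounds have shrunk the sets; beyond this bookkeeping the argument presents no real obstacle.
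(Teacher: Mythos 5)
Your argument is correct, and its key device is the same one the paper uses: color the subsets on one side by the \emph{function} recording the colors of their unions with subsets of the other side, so that a single application of classical Ramsey (with a huge but finite palette) makes the color depend only on the other side, and then a second, ordinary Ramsey application finishes the job. The difference is organizational. The paper proves the statement for infinite $A,B$ and deduces the finite version by compactness; there, one pass over $B$ (coloring each $T_B$ of size at most $d$ by the map $T_A \mapsto \chi(T_A\cup T_B)$ on a fixed $m$-element $X\subseteq A$) handles all intersection sizes simultaneously, followed by one multi-size Ramsey application inside $X$. You instead stay finite and iterate over the split size $i=0,\ldots,d$, doing two Ramsey applications per stage; this buys explicit (tower-type) bounds and avoids both infinite Ramsey and compactness, at the cost of the backward bookkeeping you mention. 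That bookkeeping is genuinely fine, but note the one point that must be handled in the right order: at stage $i$ the number of colors for the $A$-side application is $c^{\binom{|B|}{d-i}}$, which depends on the current size of $B$, whereas the $B$-side applications use only $c$ colors. So you must first fix the whole sequence of $B$-sizes backwards from $k$ (possible since those Ramsey numbers involve only $c$), and only then compute the required $A$-sizes backwards using the now-known $B$-sizes; with that ordering the recursion closes and the proof is complete.
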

\begin{proof}
We will prove the theorem under the assumption that $A,B$ are infinite. The finite version then follows by compactness.

Let $m$ be such that given a set $X$ of size $m$ together with a coloring of all of its subsets of size at most $d$ using $c$ colors, we can find a subset $A' \subseteq X$ of size $k$ and colors $c_0,\ldots,c_d$ such that the color of any $T \subseteq A'$ of size at most $d$ is $c_{|T|}$. Such an $m$ exists due to Ramsey's theorem.

Let $X$ be an arbitrary subset of $A$ of size $m$. Let $\chi$ be the $c$-coloring of the subsets of $A \cup B$ of size $d$. Assign every $T_B \subseteq B$ of size at most $d$ the color
\[
 T_A \mapsto \chi(T_A \cup T_B),
\]
where $T_A$ ranges over all subsets of $X$ of size $d - |T_B|$. That is, the color of $T_B$ is one of $c^{\binom{m}{d - |T_B|}}$ possible functions.
Applying Ramsey's theorem, we find an infinite subset $B' \subseteq B$ and a list of colors $c_{T_A}$, one for each $T_A \subseteq X$ of size at most $d$, such that for all $T_B \subseteq B'$ of size $d - |T_A|$, we have $\chi(T_A \cup T_B) = c_{T_A}$.

The choice of $m$ guarantees the existence of a subset $A' \subseteq X$ of size $k$ and colors $c_0,\ldots,c_d$ such that for every $T_A \subseteq A'$ of size at most $d$ and for every $T_B \subseteq B'$ of size $d - |T_A|$, we have $\chi(T_A \cup T_B) = c_{T_A} = c_{|T_A|}$.
\end{proof}

When $A,B$ are infinite, the proof above produces a subset $A' \subseteq A$ of size $k$ and an \emph{infinite} subset $B' \subseteq B$. It is natural to wonder whether we can ask for both $A'$ and $B'$ to be infinite. This is impossible in general. Indeed, let $A,B$ be two copies of $\mathbb{N}$, and color $A \times B$ using two colors as follows: $\chi(i,j) = 1$ if $i < j$ and $\chi(i,j) = 0$ otherwise. The reader can check that there are no infinite subsets $A',B'$ such that $\chi(i,j)$ is the same for all $i \in A'$ and $j \in B'$.

\section{Main theorems} \label{sec:main}

In this section we prove \Cref{thm:main,thm:main-A,thm:main-A-infinite}. Since \Cref{thm:main} is a special case of \Cref{thm:main-A}, it suffices to prove \Cref{thm:main-A,thm:main-A-infinite}. These theorems will follow from the following theorem, which is our main result.

\begin{theorem} \label{thm:main-body}
Let $A$ be a finite set containing at least two elements, and let $d \ge 1$. There exists a constant $\kappa(A,d)$, defined below, such that the following holds.

If $k \geq \kappa(A,d)$ then there exists a constant $m(A,d,k)$ such that every $A$-valued degree~$d$ function on $\binom{[n]}{k}$ is an $m(A,d,k)$-junta.

Conversely, if $1 \leq k < \kappa(A,d)$ then for every $m \geq 1$ there exist an $n$ and an $A$-valued degree~$d$ function on $\binom{[n]}{k}$ which is not an $m$-junta. Similarly, there exists an $A$-valued degree~$d$ function on $\binom{[\infty]}{k}$ which is not an $m$-junta for any finite $m$.
\end{theorem}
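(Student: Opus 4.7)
The plan is to prove both directions of the claim, with $\kappa(A,d) = k(A,d)$ (the parameter from \Cref{thm:main-A}).

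For the lower bound, I would give an explicit non-junta construction for every $k < \kappa(A,d)$ and every $\ell$ (including $\ell = \infty$, handled identically since only finitely many summands are nonzero on any input). Let $s \in \{1,\ldots,d\}$ achieve the maximum in the definition of $k(A,d)$, let $W = W(A,s)$ and $t = \lfloor d/s \rfloor$, and let $P$ be a non-constant polynomial of degree $s$ with $P(0), \ldots, P(W-1) \in A$ (which exists by definition of $W(A,s)$). The construction generalizes the two introductory examples: it decomposes the coordinates into $\ell$ disjoint ``units,'' each built from $P$ on blocks whose total size is at most $t(W-1)$, so that the whole function has degree $d$ and at least $\ell$ relevant variables. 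The budget $k < \kappa(A,d) = d + t(W - s)$ is exactly what is required to force at most one unit to be ``active'' on any input, guaranteeing $A$-valuedness.

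For the upper bound, \Cref{cor:FI2019b} already handles the case $k, n-k \geq C_A^d$, so only the range $\kappa(A,d) \le k < C_A^d$ (with $n$ unrestricted) remains; the infinite slice reduces to this by restricting to $\binom{[N]}{k}$ as $N \to \infty$. Suppose for contradiction that $f$ is an $A$-valued degree~$d$ function with arbitrarily many relevant variables. By \Cref{lem:junta-criterion}, we may select disjoint $I, J \subseteq [n]$ of prescribed size with a swap witness $f(x) \neq f(x^{(i\;j)})$ for every $(i,j) \in I \times J$. Writing $f$ in the degree~$d$ monomial basis (\Cref{lem:degree-d-repr}) and applying \Cref{thm:bipartite-ramsey} with the coefficients (suitably finitely quantized) as colors, we extract $I' \subseteq I$, $J' \subseteq J$ of any prescribed size on which the coefficient of any $d$-subset $T$ depends only on $|T \cap I'|$. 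Restricting to inputs supported on $I' \cup J'$ (with the rest fixed) then turns $f$ into a bivariate polynomial $Q(u_I, u_J)$ of degree~$d$ in the counts $u_I = \sum_{i \in I'} x_i$ and $u_J = \sum_{j \in J'} x_j$, which is $A$-valued on a rectangle of $(u_I, u_J)$-values determined by the slice constraint and the fixed coordinates.

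The heart of the argument, and its main obstacle, is to exploit $A$-valuedness on this rectangle to force $Q$ to be constant in $u_I$, contradicting the swap witnesses on $I'$. Grouping the bihomogeneous components of $Q$ by their $u_I$-degree $s'$, the key claim is that each such component is constant in $u_I$: for fixed $u_J$, $A$-valuedness on $W(A,s')$ consecutive values of $u_I$ suffices by definition of $W(A,s')$, and the budget $k \geq d + \lfloor d/s' \rfloor (W(A,s') - s')$ packaged in $\kappa(A,d)$ is exactly what supplies $\lfloor d/s' \rfloor$ ``independent'' runs of this length (one per degree tier), allowing a downward induction on $s'$. The maximum over $s$ in the definition of $k(A,d)$ precisely reflects the need to handle all degree tiers $s' \in \{1,\ldots,d\}$ simultaneously; nailing down this bookkeeping to match the sharp threshold is the delicate step.
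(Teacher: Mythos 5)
The main (junta) direction of your proposal has a genuine gap, and the reduction it rests on would fail. First, \Cref{lem:junta-criterion} only goes one way: from an all-pairs family of swap witnesses to a lower bound on junta size. The step ``$f$ is not an $m$-junta, so we may select disjoint $I,J$ of prescribed size with a witness for \emph{every} pair $(i,j)\in I\times J$'' is not what the lemma gives and is not justified (non-juntas need only supply one witnessed pair outside any small candidate set, as in the paper's proof of \Cref{thm:main-A-infinite}). Second, and more fundamentally, even granting such $I',J'$ with symmetrized coefficients, restricting to inputs supported on $I'\cup J'$ does not produce a contradiction: on the slice the constraint forces $u_I+u_J$ to be constant (a diagonal segment, not a rectangle), and, worse, the swap witnesses for $i\in I'$ live on arbitrary inputs, not on the restricted domain, so proving the restricted function is constant in $u_I$ does not contradict the relevance of the coordinates in $I'$ --- the dependence can be carried entirely by monomials mixing $I'$ with coordinates outside $I'\cup J'$. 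This is exactly why the paper does not argue by restriction to a symmetric subdomain but instead controls \emph{all} coefficients globally: quantization (\Cref{lem:quantization}), bunching via the bipartite Ramsey theorem (\Cref{lem:bunching}), sparsification (\Cref{lem:sparsification}), and then the inductive global-sparsity lemma (\Cref{lem:junta}), whose final step manufactures special inputs (a gate $T'$, $w$ blocks from a cleaned-up family $\mathcal{R}_4$, and a padding set $P$ chosen so that all cross coefficients vanish) on which $f$ genuinely becomes a univariate polynomial in $w$, at which point the definition of $\kappa(A,d)$ applies. Your ``degree tiers / independent runs'' paragraph is precisely this missing mechanism, and acknowledging it as ``the delicate step'' leaves the theorem's hard direction unproved.

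The converse direction is also not correct as described. A sum of $\ell$ disjoint copies of a $P$-gadget with ``at most one unit active'' only works in the Boolean-type case $k\le d$; for general $A$ nothing forces at most one unit to be partially occupied, and partial occupancies add up. For instance with $A=\{0,1,3\}$, $d=2$, $k=5$, $P(w)=3-2w+\binom{w}{2}$, an input with $2$ ones in one unit and $3$ in another gives a value outside $A$ (and the constant terms $P(0)$ of inactive units already break $A$-valuedness). The paper's construction is a \emph{single} gadget $a(1-x_{\{1,\ldots,t\}})+x_{\{1,\ldots,t\}}P\bigl(\sum_i x_{B_i}\bigr)$ with many blocks $B_i$ of size $r$, where the gate caps the argument of $P$ at $\lfloor\frac{k-t}{r}\rfloor$ and non-juntaness comes from swaps between block coordinates and padding zero coordinates (\Cref{lem:junta-criterion}), not from repeating the gadget; it also needs the separate case $k\le d$. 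Your choice of the maximizing $s$ in $k(A,d)$ is fine (it does yield a failing instance of condition~3 when $d<k<\kappa(A,d)$), but the gadget you then build from it must be the gated one, not a disjoint sum.
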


The constant $\kappa(A,d)$ is the smallest value $\kappa$ such that all of the following hold:
\begin{enumerate}
	\item $\kappa > d$.
	\item For all $e \in \{0,\ldots,d-1\}$: if $P$ is a univariate polynomial of degree at most $d-e$ and $P(0),\ldots,P(\kappa-e) \in A$ then $P$ is constant.
	\item For all $t \geq 0$ and $r,s \geq 1$ satisfying $t + rs \leq d$: if $P$ is a univariate polynomial of degree at most $s$ and $P(0),\ldots,P(\lfloor \frac{\kappa-t}{r} \rfloor) \in A$ then $P$ is constant.
\end{enumerate}

We show in \Cref{sec:parameter} that $\kappa(A,d)$ exists, that is, some $\kappa$ satisfies all these constraints.

Since $\kappa \geq d$, if the polynomial $P$ in Item~2 is not constant then the sequence $P(0),\ldots,P(\kappa-e)$ is not constant. For the same reason, if the polynomial $P$ in Item~3 is not constant then the sequence $P(0),\ldots,P(\lfloor \frac{k-t}{r} \rfloor)$ is not constant.

Let us explain this definition by way of proving the converse part of \Cref{thm:main-body}.

\begin{proof}[Proof of converse part of \Cref{thm:main-body}]
Let $a,b$ be two distinct elements of $A$. For each $k$ such that $1 \leq k < \kappa(A,d)$ and each $m \geq k$, we will construct $n$ and an $A$-valued degree~$d$ function on the slice $\binom{[n]}{k}$ which is not an $(m-1)$-junta. In order to prove that the function is not a junta, we will appeal to \Cref{lem:junta-criterion}.

Suppose first that $1 \leq k \leq d$. Let $n = 2km$, and consider the function
\[
 f(x) = a + (b-a) \sum_{i=1}^m x_{\{ (i-1)k + 1, \ldots, i k \}}.
\]
By construction, $f$ has degree at most $k$.
The sum is always at most $1$, and so this function is $A$-valued. Let $I = \{1,\ldots,km\}$ and $J = \{km+1,\ldots,2km\}$. For each $i' = (i-1)k + \ell \in I$ and $j \in J$, let $x \in \binom{[n]}{k}$ be given by $x_{(i-1)k+1} = \cdots = x_{ik} = 1$, and all other coordinates are zero. Then $f(x) = b$ and $f(x^{(i'\;j)}) = a$. Applying \Cref{lem:junta-criterion}, we see that $f$ is not an $(m-1)$-junta.

From now on, we assume that $k > d$.

Suppose next that $e \in \{0,\ldots,d-1\}$ and there exists a univariate polynomial $P$ of degree at most $d-e$ such that $P(0),\ldots,P(k-e) \in A$ and $P$ is non-constant. Since $k \geq d$, the list $P(0),\ldots,P(k-e)$ cannot be constant, and so $P(w) \neq P(w-1)$ for some $w \in \{1,\ldots,k-e\}$. Let $n = e + 2m$, where $m \geq k-e$, and consider the function
\[
 f(x) = a(1 - x_{\{1,\ldots,e\}}) + x_{\{1,\ldots,e\}} P\left(\sum_{i=1}^m x_{e+i}\right).
\]
By construction, $f$ has degree at most $e+(d-e) = d$. If $x_{\{1,\ldots,e\}} = 0$ then $f(x) = a$, and otherwise, the input to $P$ is at most $k - e$, and so $f$ is $A$-valued. Let $I = \{e+1,\ldots,e+m\}$ and $J = \{e+m+1,\ldots,e+2m\}$. For each $i' = i + e \in I$ and $j \in J$, let $x \in \binom{[n]}{k}$ be any input such that $x_1 = \cdots = x_e = 1$; $x_{e+h} = 1$ for exactly $w$ many $h \in \{1,\ldots,m\}$; and $x_j = 0$.
This requires $e+w \leq k$ inputs to be~$1$ and $m-w+1 \leq m$ inputs to be~$0$. Since $n-k \geq m$, such an input exists.
The input $x$ satisfies $f(x) = P(w)$ and $f(x^{(i\;j)}) = P(w-1)$. Applying \Cref{lem:junta-criterion}, we see that $f$ is not an $(m-1)$-junta.

Finally, suppose that $t \geq 0$ and $r,s \geq 1$ satisfy $t + rs \leq d$, and that there exists a univariate polynomial $P$ of degree at most $s \leq \lfloor \frac{d-t}{r} \rfloor$ such that $P(0),\ldots,P(\lfloor \frac{k-t}{r} \rfloor) \in A$ and $P$ is non-constant. Since $k \geq d$, the list $P(0),\ldots,P(\lfloor \frac{k-t}{r} \rfloor)$ cannot be constant, and so $P(w) \neq P(w-1)$ for some $w \in \{1,\ldots,\lfloor \frac{k-t}{r} \rfloor\}$. Let $n = t + 2rm$, where $m \geq k-t$, and consider the function
\[
 f(x) = a(1 - x_{\{1,\ldots,t\}}) + x_{\{1,\ldots,t\}} P\left( \sum_{i=1}^m x_{\{t+(i-1)r+1,\ldots,t+ir\}} \right).
\]
By construction, $f$ has degree at most $t + rs \leq d$. If $x_{\{1,\ldots,t\}} = 0$ then $f(x) = a$, and otherwise, the input to $P$ is at most $\frac{k-t}{r}$, and so $f$ is $A$-valued. Let $I = \{t+1,\ldots,t+rm\}$ and $J = \{t+rm+1,\ldots,t+2rm\}$. For each $i' = t + (i-1)r + \ell$ and $j \in J$, let $x \in \binom{[n]}{k}$ be given by $x_1 = \cdots = x_t = 1$; $x_{t+(h-1)r+1} = \cdots = x_{t+hr} = 1$ for exactly $w$ many $h \in \{1,\ldots,m\}$; and $x_j = 0$.
This requires $t + rw \leq k$ inputs to be~$1$ and $m-w+1 \leq m$ inputs to be~$0$. Since $n-k \geq m$, such an input exists.
The input $x$ satisfies $f(x) = P(w)$ and $f(x^{(i\;j)}) = P(w-1)$. Applying \Cref{lem:junta-criterion}, we see that $f$ is not an $(m-1)$-junta.

\smallskip

Taking $m = \infty$ and allowing for infinitely many more input coordinates, in all cases listed above we obtain $A$-valued degree~$d$ functions on $\binom{[\infty]}{k}$ which are not $m$-juntas for any finite $m$. For example, when $1 \leq k \leq d$ we can consider the function
\[
 f(x) = a + (b-a) \sum_{i=1}^\infty x_{\{2(i-1)k + 2, \ldots, 2ik\}}.
\]
For any $m$, we can take $I = \{x_2,x_4,\ldots,x_{2m}\}$ and $J = \{x_1,x_3,\ldots,x_{2m-1}\}$ and conclude, via \Cref{lem:junta-criterion}, that $f$ is not an $(m-1)$-junta.
\end{proof}

The proof of \Cref{thm:main-body} occupies \Cref{sec:quantization,sec:bunching,sec:sparsification,sec:junta}.
%Throughout the proof, we freely assume that $n$ is large enough (as a function of $A,d,k$). We can make this assumption since any function on $\binom{[n]}{k}$ is trivially an $n$-junta.
%
In order to complete the proof of \Cref{thm:main-A,thm:main-A-infinite}, we need the following lemma, proved in \Cref{sec:junta}.

\begin{lemma} \label{lem:parameter}
Let $A$ be a finite set containing at least two elements, and let $d \geq 1$. The parameters $\kappa(A,d)$ and $k(A,d)$, defined in \Cref{thm:main-body,thm:main-A}, are equal.

Furthermore, if $A$ is an arithmetic progression then $k(A,d) = |A|d$.
\end{lemma}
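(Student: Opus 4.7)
The plan is to translate each defining condition of $\kappa(A,d)$ into an explicit inequality on $\kappa$ expressed through $W(A,s)$, and then to observe that the resulting optimization is exactly $k(A,d)$.

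First, I would notice that condition~(2) is the $r=1$ slice of condition~(3): taking $(t,r,s) = (e, 1, d-e)$ one has $t + rs = d \le d$ and recovers condition~(2) for each $e \in \{0,\dots,d-1\}$. So only conditions~(1) and~(3) matter. By the very definition of $W(A,s)$, the polynomial statement in condition~(3) is equivalent to $\lfloor (\kappa - t)/r \rfloor \ge W(A,s)$, which for an integer $\kappa$ is in turn equivalent to $\kappa \ge t + r\,W(A,s)$. Hence
\[
  \kappa(A,d) = \max\Bigl(d+1,\ \max_{\substack{t \ge 0,\ r,s \ge 1\\ t+rs \le d}} \bigl(t + r\,W(A,s)\bigr)\Bigr).
\]

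To evaluate the inner maximum I would establish the bound $W(A,s) \ge s+1$ for every $s \ge 1$: picking two distinct elements $a, b \in A$ and interpolating the values $a, a, \dots, a, b$ at $0, 1, \dots, s$ produces a polynomial of degree exactly $s$ that is non-constant and whose first $s+1$ values lie in $A$. Writing $t + r\,W(A,s) = (t + rs) + r(W(A,s) - s)$ and using $W(A,s) - s \ge 1$, for each fixed $s \in \{1,\dots,d\}$ both summands are maximized by taking $r = \lfloor d/s \rfloor$ and $t = d - rs \ge 0$, giving $d + \lfloor d/s \rfloor\,(W(A,s) - s)$. Maximizing over $s$ yields exactly $k(A,d)$. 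Since $k(A,d) \ge d + d\,(W(A,1) - 1) \ge 2d > d$, the outer $\max(d+1,\cdot)$ collapses, so $\kappa(A,d) = k(A,d)$.

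For the arithmetic-progression case, an affine change of variables reduces to $A = \{0, 1, \dots, a-1\}$ with $a = |A|$. A pigeonhole argument gives $W(A,s) \le as$: if a non-constant polynomial of degree $\le s$ takes values in $A$ at $as + 1$ consecutive integers, some value is attained at least $s+1$ times, forcing the polynomial to be constant. Conversely, $P(x) = x$ witnesses $W(A,1) \ge a$, so $W(A,1) = a$, contributing $d(a-1)$ to the inner maximum at $s=1$. For $s \ge 2$ the pigeonhole bound gives $\lfloor d/s \rfloor\,(W(A,s) - s) \le \lfloor d/s \rfloor \cdot s (a-1) \le d(a-1)$, so the maximum is attained at $s = 1$ and $k(A,d) = d + d(a-1) = |A|\,d$.

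The proof is essentially bookkeeping; there is no substantive obstacle once one spots that condition~(2) is absorbed into condition~(3) at $r=1$ and that the two-element interpolation yields the strict inequality $W(A,s) > s$. The only care required is with the floor function (the equivalence $\lfloor (\kappa - t)/r \rfloor \ge W(A,s) \iff \kappa \ge t + r\,W(A,s)$ for integer $\kappa$) and with checking that $t = d - rs \ge 0$ under the optimal choice $r = \lfloor d/s \rfloor$.
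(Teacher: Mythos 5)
Your argument is correct and follows essentially the same route as the paper: you translate conditions (2)--(3) into the inequalities $\kappa \ge t + r\,W(A,s)$ (noting (2) is the $r=1$ case of (3)), use $s < W(A,s)$ from the two-point interpolation to see the optimum is at $r=\lfloor d/s\rfloor$, $t=d-rs$, and handle the arithmetic-progression case via $W(A,1)=|A|$ together with the pigeonhole bound $W(A,s)\le |A|s$ — exactly the content of the paper's Lemmas on $W(A,d)$ and $\kappa(A,d)$. The only cosmetic difference is that you fold the paper's intermediate formula for $\kappa(A,d)$ directly into the optimization rather than stating it separately.
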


We can now prove our main theorems.

\begin{proof}[Proof of \Cref{thm:main-A}]
Given \Cref{lem:parameter}, the converse direction follows from \Cref{thm:main-body}. These two results also imply that for every $k \geq k(A,d)$ there is a constant $m(A,d,k)$ such that for any $n \geq 2k$, any $A$-valued degree~$d$ function on $\binom{[n]}{k}$ is an $m(A,d,k)$-junta.
\Cref{cor:FI2019b} shows that if $k \geq C_A^d$, $n \geq 2k$, and $f$ is an $A$-valued degree~$d$ function on $\binom{[n]}{k}$, then $f$ is a $K_A C_A^d$-junta. Therefore the theorem holds for
\[
 m(A,d) = \max(m(A,d,k(A,d)),\ldots,m(A,d,C_A^d),K_AC_A^d). \qedhere
\]
\end{proof}

\begin{proof}[Proof of \Cref{thm:main-A-infinite}]
Given \Cref{lem:parameter}, the converse direction follows from \Cref{thm:main-body}. Suppose now that $k \geq k(A,d)$ and that $f$ is an $A$-valued degree~$d$ function on $\binom{[\infty]}{k}$.

We first show that $f$ is an $m$-junta for $m = 2m(A,d)$. Suppose that this is not the case. We construct a sequence $i_1,j_1,\ldots,i_{m(A,d)+1},j_{m(A,d)+1}$ as follows. Given $i_1,j_1,\ldots,i_t,j_t$ for $t \leq m(A,d)$, since $f$ is not a $K_t$-junta for $K_t = \{i_1,j_1,\ldots,i_t,j_t\}$, we can find an input $v_{t+1} \in \binom{[\infty]}{k}$ and indices $i_{t+1},j_{t+1} \notin K_t$ such that $f(v_{t+1}) \neq f(v_{t+1}^{(i_{t+1}\;j_{t+1})})$.

Let $S_t$ be the set of $1$-indices of $v_t$, and let $f'$ be the restriction of $f$ to a finite slice obtained by zeroing out all coordinates other than the ones in
\[
 \bigcup_{t=1}^{m(A,d)+1} (S_t \cup \{i_t,j_t\}).
\]
According to \Cref{thm:main-A}, $f'$ is a $K$-junta for some $K$ of size at most $m(A,d)$. By construction, the inputs $v_1,\ldots,v_{m(A,d)+1}$ restrict to inputs on the domain of $f'$ which satisfy $f'(v_t) \neq f'(v_t^{(i_t\;j_t)})$. This means that $K$ intersects $\{i_t,j_t\}$ for all $t \in [m(A,d) + 1]$, and so $|K| > m(A,d)$. This contradiction shows that $f$ must be an $m$-junta. Therefore we can identify $f$ with an $A$-valued degree~$d$ function on $\binom{[m]}{d}$, which according to \Cref{thm:main-A} is an $m(A,d)$-junta.
\end{proof}

\subsection{Quantization} \label{sec:quantization}

Let $f$ be an $A$-valued degree~$d$ function on $\binom{[n]}{k}$, where $k \geq d$.
According to \Cref{lem:degree-d-repr}, we can represent $f$ as a linear combination of degree~$d$ monomials. In this part of the proof we show that the coefficients are \emph{quantized}, in the sense that they belong to a set $\mathfrak{C}$ depending only on $A,d,k$.

\begin{lemma} \label{lem:quantization}
For any $k \geq d \geq 1$ and finite $A \subseteq \mathbb{R}$ there exists a finite set $\mathfrak{C} \subseteq \mathbb{R}$ such that the following holds.

Let $f$ be an $A$-valued degree~$d$ function on $\binom{[n]}{k}$, where $n \geq k + d$, and suppose that
\[
 f(x) = \sum_{\substack{S \subseteq [n] \\ |S| = d}} c(S) x_S.
\]
Then all coefficients $c(S)$ belong to $\mathfrak{C}$.
\end{lemma}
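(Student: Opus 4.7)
The plan is to show that for each $d$-set $S$, the coefficient $c(S)$ is already determined by the restriction of $f$ to a slice of bounded size (depending only on $d$ and $k$), and therefore can take only finitely many values given that $f$ is $A$-valued.

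First, I note that since $n \geq k+d$ we have $n - k \geq d$, so by the remark following \Cref{lem:degree-d-repr} the representation of a degree-$d$ function as a linear combination of degree-$d$ monomials on $\binom{[n]}{k}$ is unique. Thus the coefficients $c(S)$ are well defined functionals of $f$.

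Next, fix any $d$-set $S \subseteq [n]$ and choose a superset $M \subseteq [n]$ with $S \subseteq M$ and $|M| = k+d$; such $M$ exists because $n \geq k+d$. For any $x \in \binom{M}{k}$, every monomial $x_T$ with $T \not\subseteq M$ contains a coordinate outside $M$ and hence evaluates to $0$, so
\[
 f(x) = \sum_{\substack{T \subseteq M \\ |T| = d}} c(T)\, x_T \qquad \text{for every } x \in \binom{M}{k}.
\]
This expresses $f|_{\binom{M}{k}}$ as a linear combination of degree-$d$ monomials on the slice $\binom{M}{k}$, and since $|M| - k = d$, uniqueness again applies. Therefore $c(S)$ coincides with the coefficient of $x_S$ in the unique degree-$d$ representation of $f|_{\binom{M}{k}}$, which depends only on $f|_{\binom{M}{k}}$ and on $S$.

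Finally, $f|_{\binom{M}{k}}$ is an $A$-valued function on $\binom{M}{k}$; after relabeling $M$ as $[k+d]$, it is one of at most $|A|^{\binom{k+d}{k}}$ functions. I would then define $\mathfrak{C}$ to be the (finite) set of all coefficients that appear in the unique degree-$d$ representation of some $A$-valued degree-$d$ function on $\binom{[k+d]}{k}$. This set depends only on $A$, $d$, and $k$, and by the previous paragraph it contains every $c(S)$. The only step requiring real care is the restriction-and-uniqueness transfer: one must verify that the coefficient $c(S)$ is genuinely invariant under passing from $\binom{[n]}{k}$ down to $\binom{M}{k}$, which is where both the uniqueness hypothesis $n-k \geq d$ and the size choice $|M| = k+d$ are essential; the cardinality count giving finiteness of $\mathfrak{C}$ is then routine.
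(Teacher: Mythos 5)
Your proof is correct, but it takes a genuinely different route from the paper. You lean on the uniqueness of the degree-$d$ monomial representation when $n-k \geq d$ (the fact stated after \Cref{lem:degree-d-repr} and attributed to~\cite{Filmus2016a,FM2019}): restricting $f$ to a sub-slice $\binom{M}{k}$ with $S \subseteq M$ and $|M| = k+d$ kills every monomial not contained in $M$, uniqueness on that bounded slice (where the ground set exceeds the weight by exactly $d$, so the cited fact applies at the boundary) pins $c(S)$ down as a function of $f|_{\binom{M}{k}}$, and since there are at most $|A|^{\binom{k+d}{k}}$ candidate restrictions, the coefficients range over a finite set $\mathfrak{C}$. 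The paper instead argues directly and self-containedly: it sums $f$ over inputs built from $e$ elements of $S$ and $k-e$ elements of a disjoint $k$-set $I$, obtaining quantities $h(0),\ldots,h(d)$ that are bounded sums of elements of $A$, and inverts a triangular linear system to express $c(S)$ as a fixed (depending only on $d,k$) linear combination of them. The trade-off: your argument is shorter and conceptually cleaner, but it imports the uniqueness result as a black box and produces $\mathfrak{C}$ only implicitly (as the set of all coefficients of all $A$-valued degree-$d$ functions on $\binom{[k+d]}{k}$), whereas the paper's argument needs no outside input, gives a more explicit handle on $\mathfrak{C}$, and in fact yields uniqueness as a corollary --- note that the paper's subsequent aside, which derives uniqueness from \Cref{lem:quantization}, would become circular as a proof of uniqueness if your route were adopted, so under your approach uniqueness must genuinely be taken from the cited references.
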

\begin{proof}
Let $S \subseteq [n]$ be an arbitrary subset of size $d$, and let $I \subseteq [n]$ be an arbitrary subset of size $k$ disjoint from $S$. For every $e \in \{0,\ldots,d\}$, define
\[
 h(e) =
 \sum_{\substack{S' \subseteq S \\ |S'| = e}}
 \sum_{\substack{I' \subseteq I \\ |I'| = k-e}}
 f(S' \cup I').
\]
Each $h(e)$ is a sum of at most $2^{d+k}$ many elements from $A$, and so belongs to some finite set.

In order to express $h(e)$ in terms of coefficients $c(T)$, for $e \in \{0,\ldots,d\}$ define
\[
 \gamma(e) =
 \sum_{\substack{S' \subseteq S \\ |S'| = e}}
 \sum_{\substack{I' \subseteq I \\ |I'| = d-e}}
 c(S' \cup I').
\]
Simple combinatorics shows that
\[
 h(e) = \sum_{e'=0}^e \binom{e}{e'} \binom{k-e}{d-e} \gamma(e').
\]

Each $h(e)$ is a linear combination of $\gamma(0),\ldots,\gamma(e)$ whose coefficients depend only on $d,k$, in which the coefficient of $\gamma(e)$ is non-zero. Therefore we can express each $\gamma(e)$ as a similar linear combination of $h(0),\ldots,h(e)$. In particular, $c(S) = \gamma(d)$ is some linear combination of $h(0),\ldots,h(d)$, and so belongs to some finite set.
\end{proof}

The condition $n-k \geq d$ is necessary: if $n-k < d$ then
\[
 C\prod_{i=1}^d (1 - x_i)
\]
is a degree~$d$ polynomial which represents the zero function for any $C \in \mathbb{R}$.

As an aside, \Cref{lem:quantization} implies that the representation of \Cref{lem:degree-d-repr} is unique. Indeed, if $f = \sum_S c_1(S) x_S = \sum_S c_2(S) x_S$ are two such representations, then $f = \sum_S (\theta c_1(S) + (1-\theta) c_2(S)) x_S$ in another such representation for any real $\theta$. If $c_1(S) \neq c_2(S)$, then $\{ \theta c_1(S) + (1-\theta) c_2(S) : \theta \in \mathbb{R} \} = \mathbb{R}$, contradicting \Cref{lem:quantization} when applied to the finite set $A$ which is the range of $f$.

\subsection{Bunching of coefficients} \label{sec:bunching}

Suppose that $f$ is a degree~$d$ junta. \Cref{lem:quantization} shows that its degree~$d$ expansion is quantized. Yet it is not necessarily the case that the degree~$d$ expansion is sparse. For example, the degree~$d$ expansion of $x_{\{1,\ldots,d-1\}}$ is
\[
 \frac{1}{k-d+1} \sum_{i=d}^n x_{\{1,\ldots,d-1,i\}}.
\]
In the following steps of the proof, we gradually convert this kind of expansion into an expansion which mentions a bounded number of variables. The first step shows that the coefficients $c(S)$ in the degree~$d$ expansion are ``bunched'' in the following sense.

\begin{lemma} \label{lem:bunching}
For finite $A \subseteq \mathbb{R}$ containing at least two elements, $d \geq 1$, and $k \geq \kappa(A,d)$, there is a constant $N$ for which the following holds.

Let $f$ be an $A$-valued degree~$d$ function on $\binom{[n]}{k}$, where $n \geq k + d$, and suppose that
\[
 f(x) = \sum_{\substack{S \subseteq [n] \\ |S| = d}} c(S) x_S
\]
is the expansion whose existence is guaranteed by \Cref{lem:degree-d-repr}.

We can assign each subset $T \subseteq [n]$ of size smaller than $d$ a value $c(T) \in \mathfrak{C}$ (where $\mathfrak{C}$ is the set promised by \Cref{lem:quantization}) such that $c(T \cup \{i\}) = c(T)$ for all but $N$ many $i \in [n] \setminus T$.
\end{lemma}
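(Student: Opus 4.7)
The plan is to prove the lemma by reverse induction on $|T|$, defining $c(T) \in \mathfrak{C}$ starting from $|T| = d-1$ (where $c(T \cup \{i\})$ is an actual degree-$d$ coefficient from the expansion of $f$) and working down to $|T| = 0$, at each level choosing $c(T)$ as a ``majority'' value among $\{c(T \cup \{i\}) : i \in [n] \setminus T\}$ and bounding the number of exceptions. The main technical content lies in the base case $|T| = d-1$; the inductive steps will follow the same template, with the inductively-defined values at level $|T|+1$ playing the role that the actual degree-$d$ coefficients play in the base case.

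For the base case I argue by contradiction. Suppose that no choice of $c(T) \in \mathfrak{C}$ yields at most $N$ exceptions. Since the coefficients $c(T \cup \{i\})$ lie in the finite set $\mathfrak{C}$ from \Cref{lem:quantization}, pigeonhole produces distinct $\alpha, \beta \in \mathfrak{C}$ each attained on large sets $I_\alpha, I_\beta \subseteq [n] \setminus T$. I would then apply the bipartite Ramsey theorem (\Cref{thm:bipartite-ramsey}), iterated across every $T' \subseteq T$ and every $(r,s)$ with $|T'|+r+s=d$, to find large $I'_\alpha \subseteq I_\alpha$ and $I'_\beta \subseteq I_\beta$ together with constants $c_{T',r,s}$ such that $c(T' \cup S'_\alpha \cup S'_\beta) = c_{T',r,s}$ whenever $S'_\alpha \subseteq I'_\alpha$ has size $r$ and $S'_\beta \subseteq I'_\beta$ has size $s$. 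In particular, $c_{T,1,0} = \alpha \neq \beta = c_{T,0,1}$.

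Next, using the input $x_w$ supported on $T$ together with $w$ specified elements of $I'_\alpha$ and $k-d+1-w$ specified elements of $I'_\beta$, I would consider the polynomial
\[
 P(w) = f(x_w) = \sum_{T' \subseteq T} \sum_{r+s=d-|T'|} c_{T',r,s} \binom{w}{r} \binom{k-d+1-w}{s},
\]
of degree at most $d$ in $w$, which takes values in $A$ on $\{0, \ldots, k-d+1\}$. Auxiliary polynomials $\tilde P_{T'}(w)$ obtained by replacing the active set $T$ with proper subsets $T' \subsetneq T$ (and rebalancing the support size by adding more $I'_\beta$-elements) range over slightly longer intervals, so Condition~2 of $\kappa(A,d)$ applied iteratively forces them to be constant, and the resulting relations among their leading coefficients cascade to kill successive top-degree terms of $P$. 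Driving $P$'s effective degree down to $1$ would then let me invoke Condition~3 of $\kappa(A,d)$ with $(t,r,s) = (d-1,1,1)$ on the range of length $\geq \kappa - d + 2$ to conclude that $P$ is constant; reading the coefficient of $w$ in $P$ then yields $\alpha = \beta$, the sought contradiction.

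The main obstacle I expect is exactly this effective-degree-reduction step. The polynomial $P$ has degree $d$ by default, but the available range $\{0, \ldots, k-d+1\}$, of length only $k-d+2$ and possibly as small as $\kappa - d + 2$, is not long enough for Condition~2 of $\kappa(A,d)$ alone to force $P$ constant. Orchestrating the comparisons between $P$ and the auxiliary polynomials $\tilde P_{T'}$ so that every leading-coefficient relation actually zeros out the next higher-degree term, and ensuring the iteration terminates at a degree where Condition~3 suffices, is the delicate combinatorial point that needs to be executed carefully; I suspect this is precisely what necessitates the specific form of the third condition in the definition of $\kappa(A,d)$.
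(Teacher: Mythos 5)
Your setup (reverse induction on $|T|$, pigeonhole onto two popular values, bipartite Ramsey to homogenize the coefficients into constants $c_{T',r,s}$) matches the paper's, but the heart of the base case --- deducing $c_{T,1,0}=c_{T,0,1}$ --- is exactly the step you leave unexecuted, and the route you sketch for it does not go through as stated. By putting \emph{all} of $T$ into the input you get a polynomial $P(w)$ of degree up to $d$ on a range of length only $k-d+2$, which Condition~2 cannot handle, and you propose to repair this by a ``cascade'': force the auxiliary polynomials $\tilde P_{T'}$ to be constant and let their leading-coefficient relations kill the top-degree terms of $P$. But Condition~2 applies directly only to $\tilde P_{\emptyset}$ (for $T'\neq\emptyset$ the degree is still up to $d$ while the range has shrunk to $k-|T'|$), and passing from ``$\tilde P_{T'}$ is constant'' to ``the high-degree contributions inside $\tilde P_{T''}$, $T''\supsetneq T'$, cancel'' requires showing that constancy of a sum $\sum_e c_e\binom{w}{e}\binom{K-w}{D-e}$ forces all $c_e$ to be equal --- the binomial factors have shifted second arguments at each level, so constancy on one line does not transfer to the next line for free. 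That linear-independence/extraction argument is precisely the missing content; you flag it yourself as the delicate point. Your guess that Condition~3 of $\kappa(A,d)$ is what this step needs is also off: Condition~3 is never used in \Cref{lem:bunching} (your invocation with $(t,r,s)=(d-1,1,1)$ is just Condition~2 with $e=d-1$); it is needed only later, in the junta conclusion (\Cref{lem:junta}).

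The paper removes the degree/range mismatch instead of fighting it: the inner induction runs over subsets $T'\subseteq T$ from small to large, evaluating $f$ on inputs whose ones are $T'$ (with $T\setminus T'$ set to zero) plus $w$ points of one Ramsey-homogenized block and $k-|T'|-w$ of the other. The range is then $\{0,\ldots,k-|T'|\}$, and since the induction hypothesis makes every contribution indexed by a proper $T''\subsetneq T'$ constant in $w$ (Vandermonde), the $w$-dependent part has degree at most $d-|T'|$; Condition~2 with $e=|T'|$ then applies verbatim at every level, and the triangular difference computation $P(w)-P(w-1)$ yields $c_{T',e}=c_{T',0}$, culminating at $T'=T$ in the contradiction $\gamma_1=c_{T,1}=c_{T,0}=\gamma_2$. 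In effect, completing your cascade correctly would reproduce exactly this bottom-up induction. Note also that the paper's inductive step for $|T|<d-1$ is a short purely combinatorial counting argument; rerunning your base-case template there would face the additional problem that the level-$(|T|+1)$ values are majority values rather than coefficients of $f$, so no clean polynomial identity for $f$'s values is available.
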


The proof of \Cref{lem:bunching} proceeds by backwards induction on the size of the set $T$. The bulk of the work lies in the basis of the induction.

\begin{proof}[Proof of \Cref{lem:bunching}, base case]
Under the assumptions of \Cref{lem:bunching}, we assign for each subset $T \subseteq [n]$ of size $d-1$ a value $c(T) \in \mathfrak{C}$ such that $c(T \cup \{i\}) = c(T)$ for all but $N_{d-1}$ many $i \in [n] \setminus T$, where $N_{d-1}$ is a constant depending only on $A,d,k$.

Fix a subset $T \subseteq [n]$ of size $d-1$. We partition $[n] \setminus T$ into $|\mathfrak{C}|$ sets $X_\gamma$ as follows: $X_\gamma$ contains all $i \notin T$ such that $c(T \cup \{i\}) = \gamma$. For every $\gamma_1 \neq \gamma_2$, we color all non-empty subsets $S \subseteq X_{\gamma_1} \cup X_{\gamma_2}$ of size at most $d$ as follows: the color assigned to $S$ is
\[
 T' \mapsto c(T' \cup S),
\]
where $T'$ ranges over all subsets of $T$ of size $d - |S|$. According to \Cref{lem:quantization}, the color of $S$ is one of $|\mathfrak{C}|^{\binom{d-1}{d-|S|}}$ possible functions. Applying \Cref{thm:bipartite-ramsey} repeatedly, there is a constant $M$, depending only on $A,d,k$, such that if $|X_{\gamma_1}|,|X_{\gamma_2}| \geq M$ then there exist subsets $X'_{\gamma_1} \subseteq X_{\gamma_1}$ and $X'_{\gamma_2} \subseteq X_{\gamma_2}$ of size $k$ and colors $c_{T',e} \in \mathfrak{C}$, for all $T' \subseteq T$ and $e \leq d - |T'|$, such that if $S \subseteq T \cup X'_{\gamma_1} \cup X'_{\gamma_2}$ has size $d$ then $c(S) = c_{S \cap T, S \cap X_{\gamma_1}}$.

We now prove that for every $T' \subseteq T$ there exists a color $c_{T'} \in \mathfrak{C}$ such that $c_{T',e} = c_{T'}$ for all $e \leq d - |T'|$. The proof is by induction on $|T'|$. Suppose that the claim holds for all proper subsets of some $T' \subseteq T$. We prove it for $T'$.

Let $w \leq k-|T'|$. The value of $f$ on an input consisting of $T'$ together with $w$ elements from $X'_{\gamma_1}$ and $k-|T'|-w$ elements from $X'_{\gamma_2}$ is
\[
 \sum_{T'' \subsetneq T'} \binom{k - |T''|}{d - |T''|} c_{T''} +
 \sum_{e = 0}^{d - |T'|} \binom{w}{e} \binom{k-|T'|-w}{d-|T'|-e} c_{T',e}.
\]
This is a polynomial $P(w)$ of degree at most $d - |T'|$ such that $P(0),\ldots,P(k - |T'|) \in A$, and so since $k \geq \kappa(A,d)$, $P$ is constant.

Since $P(e)$ only depends on $c_{T',0},\ldots,c_{T',e}$, it follows that for every $w \in \{1,\ldots,d-|T'|\}$ we have
\[
 P(w) - P(w-1) = \binom{k-|T'|-w}{d-|T'|-w} c_{T',w} - \sum_{e=0}^{w-1} \rho_{w,e} c_{T',e},
\]
for some $\rho_{w,0},\ldots,\rho_{w,w-1}$. If $c_{T',0} = c_{T',1} = \cdots = c_{T',w} = c_{T'}$ then $P(w) = P(w-1)$ since both are equal to $\sum_{T'' \subseteq T'} \binom{k-|T''|}{d-|T''|} c_{T''}$. This shows that $\sum_e \rho_{w,e} = \binom{k-|T'|-w}{d-|T'|-w}$.

We can now prove inductively that $c_{T',w} = c_{T',0}$ for $w \in \{1,\ldots,d-|T'|\}$. Suppose that this holds for $w' < w$. Then $0 = P(w) - P(w-1) = \binom{k-|T'|-w}{d-|T'|-w} (c_{T',w} - c_{T',0})$, and so $c_{T',w} = c_{T',0}$. We can therefore take $c_{T'} = c_{T',0}$.

%For each $w \leq d - |T'|$, the value of $P(w)$ is a linear combination of $c_{T',0},\ldots,c_{T',w}$ in which the coefficient of $c_{T',w}$ is non-zero (since $k \geq d$). We now prove inductively that $c_{T',w} = c_{T',0}$ for $w \in \{1,\ldots,d-|T'|\}$. Given such $w \in \{1,\ldots,d-|T'|\}$, the equation $P(w) = P(w-1)$ implies that $c_{T',w}$ equals a certain linear combination of $c_{T',0},\ldots,c_{T',w-1}$.

%Therefore there is an invertible linear transformation which maps $c_{T',0},\ldots,c_{T',d-|T'|}$ to $P(0),\ldots,P(d-|T'|)$. If the former are equal then so are the latter, and the invertible linear transformation implies that the converse holds as well. Since $P$ is constant, it follows that all $c_{T',e}$ are equal.

\smallskip

Any $i_1 \in X'_{\gamma_1}$ satisfies $\gamma_1 = c(T \cup \{i_1\}) = c_{T,1}$. Similarly, any $i_2 \in X'_{\gamma_2}$ satisfies $\gamma_2 = c(T \cup \{i_2\}) = c_{T,0}$. Since $\gamma_1 \neq \gamma_2$ whereas $c_{T,0} = c_{T,1}$, we reach a contradiction. 
It follows that at most one of the sets $X_\gamma$ can satisfy $|X_\gamma| \geq M$. Choosing $c(T')$ to be the value $\gamma$ which maximizes $|X_\gamma|$, the base case follows, with $N_{d-1} = |\mathfrak{C}|M$.
\end{proof}

The inductive step is more elementary.

\begin{proof}[Proof of \Cref{lem:bunching}, inductive step]
Let $e \leq d-2$. Suppose that each subset $T \subseteq [n]$ of size $e+1$ is assigned a value $c(T) \in \mathfrak{C}$ such that $c(T \cup \{i\}) = c(T)$ for all but $N_{e+1}$ many $i \in [n] \setminus T$. We assign for each subset $T \subseteq [n]$ of size $e$ a value $c(T) \in \mathfrak{C}$ such that $c(T \cup \{i\}) = c(T)$ for all but $N_e$ many $i \in [n] \setminus T$, where $N_e = |\mathfrak{C}|(N_{e+1}^2 + N_{e+1} + 1)$.

Fix a subset $T \subseteq [n]$ of size $e$. For $\gamma \in \mathfrak{C}$, let $X_\gamma$ consist of all $i \in [n] \setminus T$ such that $c(T \cup \{i\}) = \gamma$. In order to prove the inductive step, it suffices to show that at most one $\gamma \in \mathfrak{C}$ satisfies $|X_\gamma| \geq N_{e+1}^2 + N_{e+1} + 1$.

Suppose, for the sake of contradiction, that $|X_{\gamma_1}|, |X_{\gamma_2}| \geq N_{e+1}^2 + N_{e+1} + 1$ for some $\gamma_1 \neq \gamma_2$. Choose $N_{e+1} + 1$ arbitrary elements $i_1,\ldots,i_{N_{e+1}+1} \in X_{\gamma_1}$. By assumption, for each $i_s$ there is an exceptional set $E_s$ of size at most $N_{e+1}$ such that if $j \in [n] \setminus (T \cup \{i_s\} \cup E_s)$ then $c(T \cup \{i_s,j\}) = c(T \cup \{i_s\}) = \gamma_1$. Since $|X_{\gamma_2}| > (N_{e+1} + 1) N_{e+1}$, there exists $j \in X_{\gamma_2}$ which does not belong to any $E_s$, and consequently $c(T \cup \{j,i_s\}) = \gamma_1$ for all $s \in \{1,\ldots,N_{e+1} + 1\}$. However, this contradicts the promise that $c(T \cup \{j,i\}) = c(T \cup \{j\}) = \gamma_2$ for all but $N_{e+1}$ many $i \in [n] \setminus (T \cup \{j\})$.
\end{proof}

\Cref{lem:bunching} follows by taking $N = \max(N_0,\ldots,N_{d-1})$.

\subsection{Sparsification} \label{sec:sparsification}

If $c(S) \neq 0$ for some $S$ of size $d-1$, then we can sparsify the expansion of $f$ by introducing the appropriate product of $x_S$. In this way, we can recover $x_{\{1,\ldots,d-1\}}$ from its degree~$d$ expansion.
The following lemma carries out this procedure for all sets of size smaller than $d$.

\begin{lemma} \label{lem:sparsification}
For finite $A \subseteq \mathbb{R}$ containing at least two elements, $d \geq 1$, and $k \geq \kappa(A,d)$, there is a constant $M$ and a finite subset $\mathfrak{D}$ for which the following holds.

Let $f$ be an $A$-valued degree~$d$ function on $\binom{[n]}{k}$, where $n \geq k + d$. Then $f$ has an expression of the form
\[
 f(x) = \sum_{\substack{S \subseteq [n] \\ |S| \leq d}} C(S) x_S,
\]
where $C(S) \in \mathfrak{D}$, and for every $T \subseteq [n]$ of size less than $d$, we have $C(T \cup \{i\}) = 0$ for all but at most $M$ many $i \in [n] \setminus T$.
\end{lemma}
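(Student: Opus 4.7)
The plan is to define $C(S)$ explicitly via a weighted Möbius inversion on the bunching values, and verify both properties (correct expansion of $f$ and sparsity) by direct calculation using the identity from \Cref{lem:degree-d-repr}. Let $c(T)$ denote the coefficient from the expansion guaranteed by \Cref{lem:degree-d-repr} when $|T|=d$ and the bunching value from \Cref{lem:bunching} when $|T|<d$; all these values lie in the finite set $\mathfrak{C}$ of \Cref{lem:quantization}. Set
\[
 D(S) = \sum_{T \subseteq S} (-1)^{|S|-|T|} c(T), \qquad C(S) = \binom{k-|S|}{d-|S|} D(S),
\]
so that $C(S)$ lies in a finite set $\mathfrak{D}$ depending only on $A,d,k$ (since $\kappa(A,d) > d$ makes the binomials positive integers bounded by a constant).

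To verify $f(x) = \sum_{|S| \leq d} C(S) x_S$, I would substitute the slice identity $x_S = \binom{k-|S|}{d-|S|}^{-1} \sum_{|U|=d,\, S \subseteq U} x_U$ into the right-hand side and exchange the order of summation to obtain $\sum_{|U|=d} x_U \sum_{S \subseteq U} D(S)$. The inner sum collapses by the standard Möbius calculation ($\sum_{T \subseteq S \subseteq U}(-1)^{|S|-|T|} = [T=U]$) to $c(U)$, so the right-hand side equals $\sum_{|U|=d} c(U) x_U = f$.

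For the sparsity condition, fix $T^* \subseteq [n]$ of size $e < d$. Splitting subsets $T \subseteq T^* \cup \{i\}$ into those containing $i$ and those not yields
\[
 D(T^* \cup \{i\}) = \sum_{T \subseteq T^*} (-1)^{e-|T|} \bigl[ c(T \cup \{i\}) - c(T) \bigr].
\]
If $i$ lies outside $\bigcup_{T \subseteq T^*} E_T$, where $E_T \subseteq [n]\setminus T$ is the at-most-$N$-element exceptional set from \Cref{lem:bunching}, then every bracket vanishes and $C(T^* \cup \{i\}) = 0$. Since there are at most $2^{d-1}$ subsets $T \subseteq T^*$ and each contributes at most $N$ exceptional indices, setting $M := 2^{d-1} N$ completes the proof. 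I do not foresee a serious obstacle: the only subtlety is the binomial weight $\binom{k-|S|}{d-|S|}$ in the definition of $C(S)$, which makes the slice identity telescope correctly so that the formal Möbius inversion reproduces $f$ exactly, after which both properties follow from short calculations.
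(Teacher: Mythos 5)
Your proposal is correct and is essentially the paper's proof in closed form: the paper constructs the same coefficients by an iterative stage-by-stage subtraction of the size-$e$ bunching values (re-verifying the representation and a sparsity/bunching invariant at each stage), whereas you write down the resulting inclusion--exclusion formula $D(S)=\sum_{T\subseteq S}(-1)^{|S|-|T|}c(T)$ explicitly and check both conclusions directly via the slice identity from \Cref{lem:degree-d-repr}. Both arguments rest on \Cref{lem:quantization,lem:bunching}, and your one-shot verification goes through as written (even giving the slightly sharper constant $M=2^{d-1}N$ compared with the paper's iterated $M_{e+1}=2^{d}M_e$).
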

\begin{proof}
The transformation proceeds in several stages, and accordingly, for each $e \leq d$ we will construct a constant $M_e$, a finite subset $\mathfrak{D}_e$ (both depending only on $A,d,k$), and coefficients $c_e(S) \in \mathfrak{D}_e$ for all sets $S \subseteq [n]$ of size at most $d$, such that
\[
 f(x) = \sum_{\substack{S \subseteq [n] \\ |S| < e \text{ or } |S| = d}} \binom{k - |S|}{d - |S|} c_e(S) x_S
\]
and the following properties hold:
\begin{enumerate}[(a)]
%\item The coefficients $c_e(S)$ belong to $\mathfrak{D}_e$.
\item For every $T \subseteq [n]$ of size less than $e$, we have $c_e(T \cup \{i\}) = 0$ for all but at most $M_e$ many $i \in [n] \setminus T$.
\item For every $T \subseteq [n]$ of size between $e$ and $d-1$, we have $c_e(T \cup \{i\}) = c_e(T)$ for all but at most $M_e$ many $i \in [n] \setminus T$.
\end{enumerate}

Once we prove that, taking $M = M_d$, $\mathfrak{D} = \mathfrak{D}_d$ and $C(S) = \binom{k-|S|}{d-|S|} c_d(S)$ will prove the lemma.

\smallskip

When $e = 0$, \Cref{lem:bunching} shows that we can take $M_0 = N$, $\mathfrak{D}_0 = \mathfrak{C}$, and $c_0 = c$.

Now suppose that we have constructed $M_e,\mathfrak{D}_e,c_e$, where $e < d$. We define $c_{e+1}(S) = c_e(S)$ if $|S| \leq e$, and
\[
 c_{e+1}(S) = c_e(S) - \sum_{\substack{T \subseteq S \\ |T| = e}} c_e(T)
\]
if $|S| > e$. Since the sum on the right contains at most $2^d$ terms, we can construct the finite subset $\mathfrak{D}_{e+1}$ from the finite subset $\mathfrak{D}_e$. Next, let us check that the new coefficients represent $f$:
\begin{multline*}
\sum_{\substack{S \subseteq [n] \\ |S| \leq e \text{ or } |S| = d}} \binom{k - |S|}{d - |S|} c_{e+1}(S) x_S	= \\
\sum_{\substack{S \subseteq [n] \\ |S| < e}} \binom{k - |S|}{d - |S|} c_e(S) x_S +
\sum_{\substack{T \subseteq [n] \\ |T| = e}} \binom{k - e}{d - e} c_e(T) x_T +
\sum_{\substack{S \subseteq [n] \\ |S| = d}} \left(c_e(S) - \sum_{\substack{T \subseteq S \\ |T| = e}} c_e(T) \right) x_S = \\
\sum_{\substack{S \subseteq [n] \\ |S| < e}} \binom{k - |S|}{d - |S|} c_e(S) x_S +
\sum_{\substack{T \subseteq [n] \\ |T| = e}} c_e(T) \sum_{\substack{T \subseteq S \subseteq [n] \\ |S| = d}} x_S +
\sum_{\substack{S \subseteq [n] \\ |S| = d}} \left(c_e(S) - \sum_{\substack{T \subseteq S \\ |T| = e}} c_e(T) \right) x_S = \\
\sum_{\substack{S \subseteq [n] \\ |S| < e}} \binom{k - |S|}{d - |S|} c_e(S) x_S +
\sum_{\substack{S \subseteq [n] \\ |S| = d}} c_e(S) x_S = f(x).
\end{multline*}

It remains to prove properties~(a) and~(b). Property~(a) follows for sets of size less than $e$ by induction. If $T \subseteq [n]$ has size $e$ and $c_{e+1}(T \cup \{i\}) \neq 0$ for some $i \in [n] \setminus T$ then since
\[
 c_{e+1}(T \cup \{i\}) = %c_e(T \cup \{i\}) - \sum_{\substack{R \subseteq T \cup \{i\} \\ |R| = e}} c_e(R) =
 c_e(T \cup \{i\}) - c_e(T) - \sum_{\substack{R \subseteq T \\ |R| = e-1}} c_e(R \cup \{i\}),
\]
either $c_e(T \cup \{i\}) \neq c_e(T)$ or $c_e(R \cup \{i\}) \neq 0$ for some subset $R \subseteq T$ of size $e-1$. Property~(b) of $c_e$ shows that there are at most $M_e$ many $i \notin T$ such that $c_e(T \cup \{i\}) \neq c_e(T)$. For each $R$, property~(a) of $c_e$ shows that there are at most $M_e$ many $i \notin T$ such that $c_e(R \cup \{i\}) \neq 0$. In total, we deduce that $c_{e+1}(T \cup \{i\}) = 0$ for all but at most $(e+1)M_e$ indices $i \notin T$.

The proof of property~(b) is similar. If $T \subseteq [n]$ has size at least $e+1$ and $c_{e+1}(T \cup \{i\}) \neq c_{e+1}(T)$ then since
\[
 c_{e+1}(T \cup \{i\}) - c_{e+1}(T) =
 c_e(T \cup \{i\}) - c_e(T) + \sum_{\substack{R \subseteq T \\ |R| = e-1}} c_e(R \cup \{i\}),
\]
either $c_e(T \cup \{i\}) \neq c_e(T)$ or $c_e(R \cup \{i\}) \neq 0$ for some $R \subseteq T$ of size $e-1$ not including $i$. Property~(b) of $c_e$ shows that there are at most $M_e$ many $i \notin T$ such that $c_e(T \cup \{i\}) \neq c_e(T)$. For each $R$, property~(a) of $c_e$ shows that there are at most $M_e$ many $i \notin T$ such that $c_e(R \cup \{i\}) \neq 0$. In total, we deduce that $c_{e+1}(T \cup \{i\}) = c_{e+1}(T)$ for all but at most $2^dM_e$ indices $i \notin T$.

We complete the proof of the inductive step by taking $M_{e+1} = 2^d M_e$.
\end{proof}

\subsection{Junta conclusion} \label{sec:junta}

\Cref{lem:sparsification} gives us an expression for $f$ in which the coefficients $C(S)$ are locally sparse: for each $T$, only a bounded number of coefficients $C(T \cup \{i\})$ are non-zero. We would like to extend this to global sparsity: only a bounded number of coefficients $C(S)$ are non-zero. We do so in steps, proving the following lemma inductively.

\begin{lemma} \label{lem:junta}
For any finite $A \subseteq \mathbb{R}$ containing at least two elements, $d \geq 1$, and $k \geq \kappa(A,d)$, and any $t + r \leq d$, there exist constants $N(t,r) \geq k + d$ and $L(t,r)$ such that the following holds.

Let $f$ be an $A$-valued degree~$d$ function on $\binom{[n]}{k}$, where $n \geq N(t,r)$. Let $C(S)$ be the coefficients of the expression in \Cref{lem:sparsification}. For any subset $T \subseteq [n]$ of size $t$, there at most $L(t,r)$ many subsets $R \subseteq [n] \setminus T$ of size $r$ such that $C(T \cup R) \neq 0$.
\end{lemma}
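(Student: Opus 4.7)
I would prove the lemma by induction on $r$. The base $r = 0$ is trivial and $r = 1$ is immediate from \Cref{lem:sparsification} with $L(t, 1) = M$. For $r \geq 2$, fix $T$ of size $t$ with $t + r \leq d$ and let $\mathcal{R} = \{R \subseteq [n] \setminus T : |R| = r,\ C(T \cup R) \neq 0\}$. Assuming $|\mathcal{R}|$ exceeds a threshold to be fixed, the sunflower lemma yields a large sunflower $R_1, \ldots, R_p \in \mathcal{R}$ with common core $K$. When $1 \leq |K| \leq r - 1$, the petals $R_i \setminus K$ are pairwise disjoint of size $r - |K| < r$ and satisfy $C((T \cup K) \cup (R_i \setminus K)) \neq 0$, so the inductive hypothesis applied at parameters $(t + |K|, r - |K|)$ bounds $p$ by $L(t + |K|, r - |K|)$. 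The main work is the case $|K| = 0$, where the $R_i$ are pairwise disjoint.

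In that case I would apply \Cref{thm:bipartite-ramsey} iteratively to $\{R_i\}$ and to a large outside set $U_0 \subseteq [n] \setminus (T \cup \bigcup_i R_i)$ so that, on the reduced structure, the coefficient $C(S)$ depends only on the ``type'' of $S$ (its intersection pattern with $T$, with each $R_i$, and with $U_0$). Then \Cref{lem:sparsification} forces many of these types to vanish: whenever a type arises from another by inserting a single new coordinate in $U_0$ or in a fresh block $R_{i'}$, the many Ramsey-equivalent positions conflict with the sparsity bound $M$, so the common value must be $0$. Iterating this argument kills all types except the ``canonical'' ones $S = T' \cup \bigcup_{j \in J} R_j$ with $T' \subseteq T$ and $J$ a set of fully-included blocks. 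Write $c_{e, a}$ for the corresponding Ramsey-symmetric value, defined for $|T'| = e$, $|J| = a$, and $e + ar \leq d$.

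For each $t' \in \{0, 1, \ldots, t\}$ and $q \in \{0, 1, \ldots, \lfloor (k - t')/r \rfloor\}$ consider the input whose support is a $t'$-subset of $T$ together with $R_1 \cup \cdots \cup R_q$ and a further $k - t' - qr$ coordinates chosen inside $U_0$. The value of $f$ on this input lies in $A$ and, using only the surviving canonical contributions, equals
\[
 \tilde Q(t', q) = \sum_{\substack{e \leq t' \\ a : e + ar \leq d}} c_{e, a} \binom{t'}{e} \binom{q}{a}.
\]
A nested induction on $t'$ from $0$ up to $t$ now completes the argument. At step $t' = e$, the inductive assertion that $c_{e', a} = 0$ for $e' < e$ and $a \geq 1$ collapses $\tilde Q(e, q)$ to a constant plus $\sum_{a \geq 1} c_{e, a} \binom{q}{a}$, which is a polynomial in $q$ of degree at most $\lfloor (d - e)/r \rfloor$. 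Condition~3 in the definition of $\kappa(A, d)$, applied with parameters $(t_3, r_3, s_3) = (e, r, \lfloor (d - e)/r \rfloor)$ (so that $t_3 + r_3 s_3 \leq d$) and using $\lfloor (k - e)/r \rfloor \geq \lfloor (\kappa - e)/r \rfloor$ evaluations, forces $\tilde Q(e, \cdot)$ to be constant, hence $c_{e, a} = 0$ for all $a \geq 1$. Taking $e = t$ and $a = 1$ gives $c_{t, 1} = C(T \cup R_j) = 0$, contradicting $R_j \in \mathcal{R}$. This bounds $p$ in Case~B, and combining with Case~A bounds $|\mathcal{R}|$ via the sunflower lemma; $N(t, r)$ is chosen so that all the Ramsey extractions above succeed.

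The step I expect to be the main obstacle is the vanishing of the non-canonical coefficients in the Ramsey-reduced structure. The outside-coordinate case is handled directly by \Cref{lem:sparsification}, but ensuring that types involving partial block intersections also vanish (so that $\tilde Q(t', q)$ genuinely has degree at most $\lfloor (d - t')/r \rfloor$ and condition~3 applies) requires a careful iterative set-up of the Ramsey colouring combined with the local sparsity bound $M$. Once this is arranged, the sunflower reduction and the nested application of condition~3 proceed as above, essentially forced by the form of the definition of $\kappa(A, d)$.
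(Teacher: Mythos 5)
Your skeleton is close to the paper's (induction on $r$, reduction to a family of pairwise disjoint blocks, Ramsey homogenization over the blocks, evaluation of $f$ on inputs built from $T'$, $q$ blocks and padding coordinates, and an appeal to condition~3 in the definition of $\kappa(A,d)$), but the step you yourself flag as the main obstacle is a genuine gap, and the mechanism you propose for it does not work. You claim that after the Ramsey reduction every ``non-canonical'' coefficient vanishes, i.e.\ $C(S)=0$ whenever $S$ meets some block $R_i$ in a nonempty proper subset, and you want to force this from the local sparsity bound $M$ of \Cref{lem:sparsification} by exhibiting many Ramsey-equivalent positions. That argument only applies when the type changes by a \emph{single} coordinate ranging over many interchangeable positions: it kills types involving a coordinate of $U_0$, and types whose intersection with a block is a singleton (vary the block). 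It cannot kill a partial intersection of size at least $2$ inside a block, nor a general cross-block pattern: within one block there are no interchangeable positions, and moving the pattern to another block changes several coordinates at once, so the bound ``at most $M$ nonzero single-coordinate extensions of a fixed set'' gives nothing. Moreover your final polynomial $\tilde Q(t',q)$, and hence the application of condition~3, is stated as if the surviving coefficients depended only on $|T'|$; they depend on the actual subset $T'\subseteq T$, so one must instead work with an inclusion-minimal $T'$ carrying a nonzero contribution (as the paper does), not with $\binom{t'}{e}$-weighted sums over all $e$-subsets.

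The paper's proof shows both how to repair this and that full vanishing is not needed. First, it uses the inductive bounds $L(t',r')$ with $r'<r$ (and, in its formulation, also smaller $t$ at the same $r$) together with disjointness of the blocks to discard the boundedly many ``bad'' blocks containing a nonzero proper sub-pattern relative to $T$; this is what guarantees the crucial non-degeneracy $\Gamma(T,1)=C(T\cup R)\neq 0$ that your contradiction $c_{t,1}=0$ also relies on. Second, cross-block coefficients with proper parts are never shown to vanish; instead a hypergraph deletion argument (again powered by the inductive bounds) ensures they vanish only in the regime $|T'|+rs>d$, which is exactly what is needed for the degree bound on the evaluated polynomial, and all remaining coefficients are \emph{aggregated} into the sums $\Gamma(T',s)$ rather than killed individually. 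So to complete your proof you would have to replace the Ramsey-plus-$M$ vanishing claim by counting arguments of this kind (your sunflower extraction with nonempty core is a fine substitute for the paper's disjointification, but it does not address this issue), and restructure the final evaluation around aggregated block-sums and a minimal $T'$.
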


Before proving the lemma, let us briefly show how it implies the main part of \Cref{thm:main-body} (we proved the converse part at the beginning of \Cref{sec:main}).

\begin{proof}[Proof of main part of \Cref{thm:main-body}]
We prove the theorem with
\[
 m(A,d,k) = \max\left(N(0,1),\ldots,N(0,d), \sum_{r=1}^d r L(0,r)\right).
\]
Let $f$ be an $A$-valued degree~$d$ function on $\binom{[n]}{k}$, where $k \geq \kappa(A,d)$. If $n < N(0,r)$ for some $r \in \{1,\ldots,d\}$, then $f$ is trivially an $n$-junta, and so an $m(A,d,k)$-junta. Otherwise, consider the expression promised by \Cref{lem:sparsification}:
\[
 f(x) = \sum_{\substack{S \subseteq [n] \\ |S| \leq d}} C(S) x_S.
\]
According to \Cref{lem:junta}, for all $r \in \{1,\ldots,d\}$, at most $L(0,r)$ many sets $S \subseteq [n]$ of size $r$ satisfy $C(S) \neq 0$. If we take the union of all these sets for all $r$, we obtain a set $J$ of size at most $m(A,d,k)$ such that $f$ is a $J$-junta, completing the proof.
\end{proof}

We now turn to the proof of \Cref{lem:junta}.

\begin{proof}[Proof of \Cref{lem:junta}]
When $r = 0$, the lemma trivially holds, for $N(t,0) = k + d$ and $L(t,0) = 1$. When $r = 1$, the lemma follows directly from \Cref{lem:sparsification}, taking $N(t,1) = k + d$ and $L(t,1) = M$. Therefore we can assume that $r \geq 2$.

We prove the lemma for all other parameters by induction: first on $r$, then on $t$. This means that given $t,r$, we assume that the lemma holds for all $(t',r')$ such that $r' < r$ and for all $(t',r)$ such that $t' < t$, and prove it for $(t,r)$.

Let us be given $t,r$ such that $t + r \leq d$ and $r \geq 2$, and let $T \subseteq [n]$ be a set of size $t$. We want to bound the size of the collection $\mathcal{R}$ consisting of all subsets of $[n]$ of size $r$ which are disjoint from $T$ and satisfy $C(T \cup R) \neq 0$. We will show that for the correct choice of $N(t,r) \geq t+r$ and $L(t,r)$, the assumption $|\mathcal{R}| \geq L(t,r)$ leads to a contradiction. It follows that $|\mathcal{R}| < L(t,r)$.

Starting with $\mathcal{R}$, we will extract subcollections $\mathcal{R} \supseteq \mathcal{R}_1 \supseteq \mathcal{R}_2 \supseteq \mathcal{R}_3 \supseteq \mathcal{R}_4$ which are more and more structured:
\begin{itemize}
\item All $R \in \mathcal{R}_1$ are \emph{good}: $C(S) = 0$ for all subsets $S \subseteq T \cup R$ intersecting $R$ other than $T \cup R$ itself.
\item The sets in $\mathcal{R}_2$ are disjoint.
\item If $R_1,\ldots,R_s \in \mathcal{R}_3$ are such that $C(S) \neq 0$ for some subset $S \subseteq T \cup R_1 \cup \cdots \cup R_s$ intersecting $R_1,\ldots,R_s$ and different from $T \cup R_i$ then $|S \cap T| + rs \leq d$.
\item For all $T' \subseteq T$ and all $R_1,\ldots,R_s \in \mathcal{R}_4$, the sum of $C(T' \cup S)$ over all subsets $S \subseteq R_1 \cup \cdots \cup R_s$ intersecting $R_1,\ldots,R_s$ only depends on $T'$ and $s$.
\end{itemize}

Choosing $L(t,r)$ large enough, we will be able to guarantee that $|\mathcal{R}_4| \geq k$. Choosing $N(t,r)$ large enough, we will be able to find $k$ many points $P$ outside of $T,\mathcal{R}_4$ such that $C(S) = 0$ for any $S \subseteq T \cup \bigcup \mathcal{R}_4 \cup P$ intersecting $P$, and this will enable us to reach a contradiction.

\smallskip

We now proceed with the details. Rephrasing the above definition, a set $R \in \mathcal{R}$ is good if $C(T' \cup R') = 0$ for all $T' \subseteq T$ and non-empty $R' \subseteq R$, other than $T' = T$ and $R' = R$. In order to show that many sets are good, we bound the number of sets which are bad.

Let $T' \subsetneq T$ be a set of size $t' < t$. According to the induction hypothesis, the number of $R' \subseteq [n]$ of size $r' \in \{1,\ldots,r\}$ disjoint from $T'$ such that $C(T' \cup R') \neq 0$ is at most $L(t',r')$. Applying the induction hypothesis again, for each such $R'$, the number of sets $R'' \subseteq [n]$ of size $r - r'$ disjoint from $T' \cup R'$ such that $C(T' \cup R' \cup R'') \neq 0$ is at most $L(t'+r', r-r')$. Every set $R \in \mathcal{R}$ which is bad due to $T' \neq T$ is of the form $R' \cup R''$, and so for each $T'$, there are at most $L(t',r') L(t'+r',r-r')$ such sets.

If $T' = T$ then the same argument works as long as $r' < r$. It follows that the number of bad sets is at most
\[
 \Lambda' = \sum_{t'=0}^{t-1} \binom{t}{t'} \sum_{r'=1}^r L(t',r') L(t'+r',r-r') + \sum_{r'=1}^{r-1} L(t,r') L(t+r',r-r').
\]
Accordingly, if we define $\mathcal{R}_1$ to consist of all good $R \in \mathcal{R}$, then $|\mathcal{R}_1| \geq \Lambda_1 := L(t,r) - \Lambda'$.

\smallskip

The next step is constructing $\mathcal{R}_2$. To that end, consider a graph whose vertices are the sets in $\mathcal{R}_1$, and in which two vertices $R_1,R_2$ are connected if they are not disjoint. We will show that the graph has bounded degree, and so a large independent set.

If $R_1,R_2 \in \mathcal{R}_1$ are not disjoint then there is some $i \in R_1$ such that $i \in R_2$. Given $i \in R_1$, the induction hypothesis shows that the number of possible $R_2$ is $L(t+1,r-1)$, since $R_2 \setminus \{i\}$ is a subset of $[n]$ of size $r-1$, disjoint from $T \cup \{i\}$, such that $C(T \cup \{i\} \cup (R_2 \setminus \{i\})) \neq 0$. Since there are $r$ choices for $i$, this shows that the degree of every vertex in the graph is at most $r L(t+1,r-1)$.

A simple greedy algorithm now constructs a subset $\mathcal{R}_2 \subseteq \mathcal{R}_1$ of size at least $\Lambda_2 := \Lambda_1/(r L(t+1,r-1)+1)$.

\smallskip

In order to construct $\mathcal{R}_3$, we consider a hypergraph on the vertex set $\mathcal{R}_2$. For each $T' \subseteq T$ and $s \leq d$ such that $|T'| + rs > d$, we add a hyperedge $\{R_1,\ldots,R_s\}$ (where all $R_i$ are different) if there exist non-empty $R'_i \subseteq R_i$ such that $C(T' \cup R'_1 \cup \cdots \cup R'_s) \neq 0$ (we define $C(S) = 0$ if $|S| > d$). We will show that this graph contains few hyperedges, specifically at most $K_{t,r}|\mathcal{R}_2|^{s-1}$ hyperedges of uniformity $s$. %, and so an appropriately chosen large random subset contains few hyperedges.

Let $T' \subseteq T$ have size $t'$ and let $s \leq d$ be such that $t' + rs > d$. We want to bound the number of sets $\{R_1,\ldots,R_s\}$ (where all $R_i$ are different) such that $C(T' \cup R'_1 \cup \cdots \cup R'_s) \neq 0$ for some non-empty $R'_i \subseteq R_i$. If $R'_i = R_i$ for all $i$ then $|T' \cup R'_1 \cup \cdots \cup R'_s| = t' + rs > d$, and so $C(T' \cup R'_1 \cup \cdots \cup R'_s) = 0$. Therefore $R'_i \neq R_i$ for some $i$. By rearranging the indices, we can assume that $R'_s \neq R_s$.

There are at most $|\mathcal{R}_2|^{s-1}$ many choices for $R_1,\ldots,R_{s-1}$. For each choice of distinct $R_1,\ldots,R_{s-1}$, there are at most $2^{sr}$ many choices of non-empty $R'_1,\ldots,R'_{s-1}$. Given $R'_1,\ldots,R'_{s-1}$ of combined size $u$ and given $r' \in \{1,\ldots,r-1\}$, the induction hypothesis shows that there are at most $L(t'+u,r')$ many sets $R'_s \subseteq [n]$ of size $r'$, disjoint from $T' \cup R'_1 \cup \cdots \cup R'_{s-1}$, such that $C(T' \cup R'_1 \cup \cdots \cup R'_s) \neq 0$. For each such $R'_s$, the induction hypothesis shows that there are at most $L(t+r',r-r')$ many sets $R''_s \subseteq [n]$ disjoint from $T \cup R'_s$ such that $C(T \cup R'_s \cup R''_s) \neq 0$. Altogether, the number of hyperedges of uniformity $s$ is at most
\[
 \sum_{t'=0}^t \binom{t}{t'} |\mathcal{R}_2|^{s-1} 2^{sr} \sum_{u=0}^d \sum_{r'=1}^{r-1} L(t'+u,r') L(t+r',r-r'),
\]
where $L(t',r') = 0$ if $t' + r' > d$.
Hence we can find a constant $K_{t,r}$ (depending on known $L(t',r')$) such that the number of hyperedges of uniformity $s$ is at most $K_{t,r}|\mathcal{R}_2|^{s-1}$.

Suppose now that we sample a subset of $\mathcal{R}_2$ by including each $R \in \mathcal{R}_2$ with probability $p = |\mathcal{R}_2|^{-(1-1/d)}$, and then removing all $R$ which are incident to any surviving hyperedge. The expected number of surviving $R$ is at least
\[
 p |\mathcal{R}_2| - \sum_{s=1}^d s p^s K_{t,r} |\mathcal{R}_2|^{s-1} =
 |\mathcal{R}_2|^{1/d} - K_{t,r} \sum_{s=1}^d s |\mathcal{R}_2|^{s/d-1} \geq
 |\mathcal{R}_2|^{1/d} - K_{t,r} d^2.
\]
In particular, we can find a subset $\mathcal{R}_3$ of size at least $\Lambda_3 := \Lambda_2^{1/d} - K_{t,r}d^2$ which spans no hyperedges. That is, if $R_1,\ldots,R_s \in \mathcal{R}_3$ and $C(S) \neq 0$ for some $S \subseteq T \cup R_1 \cup \cdots \cup R_s$ intersecting all of $R_1,\ldots,R_s$, then $|S \cap T| + rs > d$.

\smallskip

We construct $\mathcal{R}_4$ by applying Ramsey's theorem. For every $s$ such that $rs \leq d$, we color every subset $\{R_1,\ldots,R_s\} \subseteq \mathcal{R}_3$ of size $s$ by the function
\[
 T' \mapsto \sum_{\substack{R'_1 \subseteq R_1,\ldots,R'_s \subseteq R_s \\ R'_1,\ldots,R'_s \neq \emptyset}} C(T' \cup R'_1 \cup \cdots \cup R'_s).
\]
where $T'$ ranges over all subsets of $T$ (recall that we defined $C(S) = 0$ when $|S| > d$). According to \Cref{lem:quantization}, all summands belong to a finite set $\mathcal{D}$, and so the sum attains one of at most $|\mathcal{D}|^{2^{rs}}$ possible values. Consequently, the number of colors is at most $(2^t)^{|\mathcal{D}|^{2^{rs}}}$. If $\mathcal{R}_3$ is large enough then we can apply Ramsey's theorem to obtain a subset $\mathcal{R}_4 \subseteq \mathcal{R}_3$ of size $k$, and values $\Gamma(T',s)$ for all $T' \subseteq T$ and $s \leq \lfloor d/r \rfloor$, such that all distinct $R_1,\ldots,R_s \in \mathcal{R}_4$ satisfy
\[
 \sum_{\substack{R'_1 \subseteq R_1,\ldots,R'_s \subseteq R_s \\ R'_1,\ldots,R'_s \neq \emptyset}} C(T' \cup R'_1 \cup \cdots \cup R'_s) = \Gamma(T',s).
\]
We can extend the definition of $\Gamma$ to larger $s$. The construction of $\mathcal{R}_3$ guarantees that $\Gamma(T',s) = 0$ if $|T'| + rs > d$. Moreover, since all $R \in \mathcal{R}_4$ are good, we know that $\Gamma(T',1) = 0$ if $T' \neq T$ and $\Gamma(T,1) \neq 0$.

\smallskip

At this point, we can explain how to choose $L(t,r)$. We choose $L(t,r)$ so that the condition $|\mathcal{R}_3| \geq \Lambda_3$ is strong enough in order for the application of Ramsey's theorem detailed above to go through. 

\smallskip

Let $V$ consist of the union of all sets in $\mathcal{R}_4$.
The next step is to choose a set $P = \{ p_1,\ldots, p_k \} \subseteq [n]$ of size $k$ such that $C(S) = 0$ for any subset $S \subseteq T \cup V \cup P$ intersecting $P$. This will be possible assuming that $n$ is large enough.

We choose $P$ in $k$ steps. In the $i$'th step, given the choice of $p_1,\ldots,p_{i-1}$, we choose $p_i$. For any $e < d$ and any subset $S' \subseteq T \cup V \cup \{p_1,\ldots,p_{i-1}\}$ of size $e$, there are at most $L(e,1)$ many $p \notin S'$ such that $C(S' \cup \{p\}) \neq 0$. Therefore we can find a suitable $p_i$ as long as
\[
 n > N_i(t,r) := t + kr + i-1 + \sum_{e=0}^{d-1} \binom{t + kr + i-1}{e} L(e,1).
\]
Accordingly, we choose $N(t,r) = \max(k+d,N_{k-1}(t,r)+1)$. This ensures that we can choose the set $P$.

\smallskip

Let $T'$ be an inclusion-minimal subset of $T$ such that $\Gamma(T',s) \neq 0$ for some $s > 0$, and let $t' = |T'|$. This means that $\Gamma(T'',s) = 0$ for all $T'' \subsetneq T'$ and $s > 0$. Such a choice is possible since $\Gamma(T,1) \neq 0$. Also, let $s' > 0$ be the minimal value such that $\Gamma(T',s') \neq 0$.

Let $w$ be such that $t' + rw \leq k$. The value of $f$ on an input consisting of $T'$ together with the union of $w$ sets from $\mathcal{R}_4$ and $k - t' - rw$ elements from $P$ is
\[
 \sum_{T'' \subseteq T'} \sum_{s=0}^d \binom{w}{s} \Gamma(T'',s) =
 \sum_{T'' \subseteq T'} \Gamma(T'',0) + \sum_{s=s'}^{\lfloor \frac{d-t'}{r} \rfloor} \binom{w}{s} \Gamma(T',s).
\]
This is a polynomial $Q(w)$ of degree at most $\lfloor \frac{d-t'}{r} \rfloor$ such that $Q(0),\ldots,Q(\lfloor \frac{k-t'}{r} \rfloor) \in A$, and so since $k \geq \kappa(A,d)$, $Q$ is constant. However, by construction, $Q(s') - Q(s'-1) = \Gamma(T',s') \neq 0$. We have reached the required contradiction, completing the proof.
\end{proof}

\subsection{The parameter \texorpdfstring{$k(A,d)$}{k(A,d)}} \label{sec:parameter}

In this subsection we show that $k(A,d) = \kappa(A,d)$, and prove that $k(A,d) = |A|d$ when $A$ is an arithmetic progression, thus proving \Cref{lem:parameter}. We start by giving an alternative formula for $\kappa(A,d)$ in terms of the parameter $W(A,d)$ introduced in \Cref{sec:introduction}, which is the minimal value $W$ such that every degree $d$ polynomial $P$ satisfying $P(0),\ldots,P(W) \in A$ is constant.

Before giving the formula for $\kappa(A,d)$ in terms of $W(A,d)$, let us show that $W(A,d)$ is indeed well-defined.

\begin{lemma} \label{lem:W-finite}
If $A \subseteq \mathbb{R}$ is a set containing at least two elements and $d \geq 1$ then $d < W(A,d) \leq |A|d$.
\end{lemma}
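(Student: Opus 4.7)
The plan is to prove the two inequalities separately by elementary arguments: the lower bound by Lagrange interpolation and the upper bound by pigeonhole combined with the fact that a nonzero degree $d$ polynomial has at most $d$ roots.

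For the lower bound $W(A,d) > d$, I would exhibit a non-constant polynomial $P$ of degree at most $d$ with $P(0),\ldots,P(d) \in A$, which by definition prevents $W$ from being $d$ or smaller. Pick two distinct elements $a,b \in A$ and prescribe the values $P(0) = a$ and $P(1) = P(2) = \cdots = P(d) = b$. Lagrange interpolation through the $d+1$ points $(0,a),(1,b),\ldots,(d,b)$ yields a polynomial of degree at most $d$ achieving exactly these values. Since $P(0) \ne P(1)$, this polynomial is non-constant, so $W(A,d) \geq d+1$.

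For the upper bound $W(A,d) \leq |A|d$, I would show that any polynomial $P$ of degree at most $d$ with $P(0),P(1),\ldots,P(|A|d) \in A$ must be constant. This is a list of $|A|d+1$ values drawn from the $|A|$-element set $A$, so by pigeonhole some $a \in A$ is attained at least $\lceil (|A|d+1)/|A|\rceil = d+1$ times. The polynomial $P - a$ then has at least $d+1$ distinct roots, yet has degree at most $d$, so $P - a \equiv 0$ and $P$ is the constant $a$. Hence the defining condition of $W(A,d)$ is satisfied at $W = |A|d$, so $W(A,d) \leq |A|d$.

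No step here is a serious obstacle; the only thing to double-check is the counting in the pigeonhole step and the fact that Lagrange interpolation actually produces a polynomial of degree at most $d$ (not exactly $d$), which is fine because the definition of $W(A,d)$ quantifies over polynomials of degree at most $d$ via the usage elsewhere in the paper (and in any case a degree $< d$ non-constant polynomial can easily be bumped up to degree exactly $d$ by adding a multiple of $\prod_{i=0}^{d-1}(x-i)$, whose values at $0,\ldots,d-1$ vanish, provided one picks the coefficient so that the value at $d$ remains in $A$; but this is unnecessary for the claim as stated).
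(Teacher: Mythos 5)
Your proposal is correct and follows essentially the same route as the paper: the upper bound via pigeonhole plus the fact that a non-constant polynomial of degree at most $d$ has at most $d$ roots, and the lower bound by exhibiting a non-constant interpolating polynomial of degree at most $d$ taking values in $A$ at $0,\ldots,d$ (the paper writes it explicitly as $a + (b-a)\prod_{i=0}^{d-1}\frac{x-i}{d-i}$, with the exceptional value at $d$ rather than at $0$, which is an immaterial difference). Your side remarks on the pigeonhole count and on ``degree at most $d$'' are fine and need no further patching.
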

\begin{proof}
Suppose that $P$ is a degree~$d$ polynomial. We will show that if $P(0),\ldots,P(W) \in A$ for $W = |A|d$ then $P$ is constant, and so $W(A,d) \leq |A| d$. According to the pigeonhole principle, there is $a \in A$ such that $P(i) = a$ for at least $d+1$ many $i \in \{0,\ldots,W\}$. Since every non-constant degree~$d$ polynomial has at most $d$ roots, we conclude that $P$ is constant.

In order to show that $W(A,d) > d$, we will exhibit a non-constant degree~$d$ polynomial $P$ satisfying $P(0),\ldots,P(d) \in A$. Let $a,b \in A$ be two distinct elements of $A$. We define
\[
 P(x) = a + (b - a) \prod_{i=0}^{d-1} \frac{x - i}{d - i}.
\]
By construction, $P(0) = \cdots = P(d-1) = a$ and $P(d) = b$.
\end{proof}

Here is the formula for $\kappa(A,d)$ in terms of $W(A,d)$. It is the minimal $\kappa$ which satisfies the following conditions:
\begin{enumerate}
\item $\kappa \ge d + 1$.
\item $\kappa-e \ge W(A,d-e)$ for all $e \in \{0,\ldots,d-1\}$.
\item $\lfloor \frac{\kappa-t}{r} \rfloor \geq W(A,s)$ whenever $r,s \geq 1$ and $t + rs \leq d$.
\end{enumerate}

This results in the following formula, whose proof is immediate.

\begin{lemma} \label{lem:kappa-def}
If $A \subseteq \mathbb{R}$ is a finite set containing at least two elements and $d \ge 1$ then
\[
 \kappa(A,d) = \max\left(d + 1, \max_{0 \leq e \leq d-1} e + W(A,d-e), \max_{\substack{1 \leq s \leq d \\ 1 \leq r \leq \lfloor d/s \rfloor}} d - rs + r W(A,s) \right).
\]
\end{lemma}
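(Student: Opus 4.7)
The plan is to translate each of the three conditions defining $\kappa(A,d)$ into an explicit numerical lower bound on $\kappa$, and then read off $\kappa(A,d)$ as the maximum of these bounds, since the minimal integer satisfying a family of lower bounds is exactly their supremum. The text has already rephrased the three conditions in the form $\kappa \geq d+1$, $\kappa - e \geq W(A,d-e)$, and $\lfloor (\kappa-t)/r \rfloor \geq W(A,s)$, using the definition of $W(A,\cdot)$; what remains is to collapse the last family to a clean maximum.

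Condition~2 gives directly $\kappa \geq e + W(A, d-e)$ for every $e \in \{0,\ldots,d-1\}$, contributing the second maximand. For condition~3, I would first note that $W(A,s)$ is an integer (and indeed $W(A,s) \geq s+1 \geq 2$ by \Cref{lem:W-finite}), so the inequality $\lfloor (\kappa-t)/r \rfloor \geq W(A,s)$ is equivalent to $\kappa \geq t + r\,W(A,s)$. For fixed $r,s \geq 1$, the constraint must hold for every integer $t \geq 0$ with $t + rs \leq d$; the tightest such $t$ is $t = d - rs$, which gives $\kappa \geq d - rs + r\,W(A,s)$. Finally, the side conditions $r,s \geq 1$ and $rs \leq d$ translate to $1 \leq s \leq d$ and $1 \leq r \leq \lfloor d/s \rfloor$, so this family of bounds contributes the third maximand. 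Combining with condition~1 yields the stated formula.

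There is no real obstacle beyond bookkeeping. The two small points to be careful about are (i) that $\kappa$ is implicitly an integer, which turns the strict inequality $\kappa > d$ in condition~1 into $\kappa \geq d+1$, and (ii) that the floor in condition~3 can be stripped against an integer right-hand side using $\lfloor x \rfloor \geq n \iff x \geq n$ for $n \in \mathbb{Z}$. Both are justified in one line each, and the rest of the proof is the observation that the minimal $\kappa$ satisfying a conjunction of lower bounds is the maximum of those bounds.
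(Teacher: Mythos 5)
Your proposal is correct and follows essentially the same route as the paper, which simply records that the formula is immediate from the restated conditions: condition~1 gives $\kappa \geq d+1$, condition~2 gives $\kappa \geq e + W(A,d-e)$, and condition~3, after stripping the floor against the integer $W(A,s)$ and taking the binding choice $t = d - rs$, gives $\kappa \geq d - rs + rW(A,s)$ over $1 \leq s \leq d$, $1 \leq r \leq \lfloor d/s\rfloor$. Your two bookkeeping remarks (integrality of $\kappa$, and $\lfloor x \rfloor \geq n \iff x \geq n$ for integer $n$) are exactly the details the paper leaves implicit, so nothing further is needed.
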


Using this formula, we can prove \Cref{lem:parameter}.

\begin{proof}[Proof of \Cref{lem:parameter}]
\Cref{lem:W-finite} shows that $W(A,d) \geq d+1$. Consequently, $0 + W(A,d-0) \geq d + 1$, and so we can drop the first term in the formula in \Cref{lem:kappa-def}. Taking $s = d-e$ and $r = 1$, the third term recovers the second term. Therefore
\[
 \kappa(A,d) =
 \max_{\substack{1 \leq s \leq d \\ 1 \leq r \leq \lfloor d/s \rfloor}} d + r (W(A,s) - s) = \max_{1 \leq s \leq d} d + \left\lfloor \frac{d}{s} \right\rfloor (W(A,s) - s),
\]
since $W(A,s) \geq s+1$ according to \Cref{lem:W-finite}. The expression on the right-hand side coincides with the formula for $k(A,d)$ in the statement of \Cref{thm:main-A}.

Suppose now that $A$ is an arithmetic progression, say $A = \{a, a+b, \ldots, a+(m-1)b\}$, where $m = |A|$. The polynomial $P(x) = a + bx$ shows that $W(A,1) > |A|-1$, and so $W(A,1) = |A|$ according to \Cref{lem:W-finite}. Taking $s = 1$ in the formula for $k(A,d)$, this shows that $k(A,d) \geq d + d(|A| - 1) = |A|d$. On the other hand, for every $s \in \{1,\ldots,d\}$ we have
\[
 d + \left\lfloor \frac{d}{s} \right\rfloor (W(A,s) - s) \leq
 d + \frac{d}{s} (s|A| - s) = |A|d,
\]
using \Cref{lem:W-finite}. Therefore $k(A,d) = |A|d$.
\end{proof}

When $A$ is not an arithmetic progression, it is not necessarily the case that $k(A,d) = |A|d$. For example, $k(A,1) = W(A,1)$ is the length of the longest arithmetic progression contained in~$A$.

Here are the values of $W(A,d),k(A,d)$ for several choices of $A$:
\[
\begin{array}{c|*{5}{c}|*{10}{c}}
\multirow{2}{*}{A} & \multicolumn{5}{c|}{W(A,d)} & \multicolumn{10}{c}{k(A,d)} \\\cline{2-16}
 & 1 & 2 & 3 & 4 & 5 & \multicolumn{2}{c}{1} & \multicolumn{2}{c}{2} & \multicolumn{2}{c}{3} & \multicolumn{2}{c}{4} & \multicolumn{2}{c}{5}\\\hline
\{0,1\} & 2 & 4 & 4 & 6 & 6 & 2 & [1] & 4 & [1,2] & 6 & [1] & 8 & [1,2] & 10 & [1] \\
\{0,1,3\} & 2 & 6 & 6 & 7 & 8 & 2 & [1] & 6 & [2] & 7 & [2] & 12 & [2] & 13 & [2] \\
\{0,1,4,5,20\} & 2 & 5 & 7 & 8 & 8 & 2 & [1] & 5 & [2] & 7 & [3] & 10 & [2] & 11 & [2] \\
\{0,1,27,126,370\} & 2 & 4 & 4 & 10 & 10 & 2 & [1] & 4 & [1,2] & 6 & [1] & 10 & [4] & 11 & [4]
\end{array}
\]
The numbers in squares indicate that values of $s$ for which $k(A,d)$ is attained.

%We chose the sets $A = \{a_1 < \cdots < a_{|A|} \}$ so that the polynomial $P$ whose first $2|A|$ values are $a_{|A|},\ldots,a_0,a_0,\ldots,a_{|A|}$ has relatively low degree.

%For example, when $A = \{0,1,3\}$ we have $W(A,1) = 2$ and $W(A,2) = W(A,3) = 6$, and so $k(A,1) = 2$, $k(A,2) = 6$, and $k(A,3) = 7$.

\section{Final remarks} \label{sec:remarks}

\paragraph{Another threshold} \Cref{thm:FI2019b}, proved in~\cite{FI2019b}, states that if $C^d \leq k \leq n-C^d$ and $f$ is a Boolean degree~$d$ function on $\binom{[n]}{k}$, then $f$ is a $KC^d$-junta. The result proved in~\cite{FI2019b} is in fact stronger: under the same assumptions, there is a Boolean degree~$d$ function $g$ \emph{on the Boolean cube} $\{0,1\}^n$ such that $f$ is the restriction of $g$ to the slice. This implies the junta conclusion since every Boolean degree~$d$ function on the Boolean cube is an $O(2^d)$-junta~\cite{NisanSzegedy,CHS,Wellens}.

In this paper, we answer one open question raised in~\cite{FI2019b}: we find the minimal $k = k(d)$ such that every Boolean degree~$d$ function on $\binom{[n]}{k}$, where $n \geq 2k$, is a junta. Another open question in~\cite{FI2019b} asks for the minimal $\ell = \ell(d)$ such that every Boolean degree~$d$ function on $\binom{[n]}{\ell}$, where $n \geq 2\ell$, is the restriction of a Boolean degree~$d$ function on $\{0,1\}^n$. 
Clearly, $\ell(d) \geq k(d)$. Is it the case that $\ell(d) = k(d)$? When $d = 1$, this follows from~\cite{FI2019a}.

More generally, we can define $\ell(A,d)$ for any finite $A$. It is not always the case that $\ell(A,d) = k(A,d)$. For example, if $A = \{0,5,7,8,12,13,15\}$ then $k(A,1) = 2$ whereas $\ell(A,1) = 3$. Indeed, the function $5x_1 + 7x_2 + 8x_3$ is $A$-valued on $\binom{[n]}{2}$ for any $n \geq 4$, but is not the restriction of any $A$-valued degree~$1$ function on $\{0,1\}^n$.

\paragraph{Multislice} The \emph{multislice} is the generalization of the slice to functions on $\{0,\ldots,m-1\}$ for arbitrary $m$. Given a partition $n = \lambda_0 + \cdots + \lambda_{m-1}$, the corresponding multislice consists of all vectors in $\{0,\ldots,m-1\}^n$ containing exactly $\lambda_i$ coordinates whose value is $i$. Given another partition $k = k_1 + \cdots + k_{m-1}$, we can consider the family of multislices with $\lambda_0 \geq k$ and $\lambda_1 = k_1,\ldots,\lambda_{m-1} = k_{m-1}$. We conjecture that all of our results extend to this setting.

\bibliographystyle{alpha}
\bibliography{biblio}

\end{document}